\newtheorem{theorem}{Theorem}
\newtheorem{corollary}[theorem]{Corollary}
\newtheorem{definition}[theorem]{Definition}
\newtheorem{lemma}[theorem]{Lemma}
\newtheorem{proposition}[theorem]{Proposition}
\newtheorem{remark}[theorem]{Remark}
\newenvironment{proof}[1][Proof]{\noindent\textbf{#1.} }{\ \rule{0.5em}{0.5em}}
\newcommand{\braces}[1]{\left\{ #1 \right \}}
\newcommand{\bfemph}[1]{\textbf{\emph{#1}}}
\DeclareMathOperator*{\argmin}{argmin}
\def\BState{\State\hskip-\ALG@thistlm}
\begin{document}

\begin{frontmatter}

\title{Combining Penalty-based and Gauss-Seidel Methods for solving Stochastic Mixed-Integer Problems}

\author{F. Oliveira}	
\author{J. Christiansen}
\author{B. Dandurand}
\author{A. Eberhard}

\address{Mathematical Sciences \\ School of Science - RMIT University}
\cortext[cor]{Corresponding Author}

\begin{abstract}
In this paper, we propose a novel decomposition approach for mixed-integer stochastic programming (SMIP) problems that is inspired by the combination of penalty-based Lagrangian and block Gauss-Seidel methods (PBGS). In this sense, PBGS is developed such that the inherent decomposable structure that SMIPs present can be exploited in a computationally efficient manner. The performance of the proposed method is compared with the Progressive Hedging method (PH), which also can be viewed as a Lagrangian-based method for obtaining solutions for SMIP. Numerical experiments performed using instances from the literature illustrate the efficiency of the proposed method in terms of computational performance and solution quality. 
\end{abstract}

\journal{International Transactions of Operational Research}

\begin{keyword}
Stochastic programming \sep Decomposition methods \sep Lagrangian duality \sep Penalty-based method \sep Gauss-Seidel method 
\end{keyword}

\end{frontmatter}

%\linenumbers

\section{Introduction}

Inspired by recent advances and the increased availability of parallel computation resources, there has been a recent surge of methods which are capable of exploiting the structure of large-scale mathematical programming problems to achieve increased efficiency. 

One relevant class of problems that can benefit from this paradigm is stochastic mixed-integer programming (SMIP) problems. 
The modelling framework for this class of problems is versatile as it simultaneously allows for representation of integer-valued decisions and uncertainty in the input data.
However, they are frequently challenging in terms of computational tractability due to their inherent NP-hard nature and their large-scale that arise from their scenario-based representations. Parallel computation is particularly appropriate for solving SMIPs, since the special structure of these problems (i.e. their partial separation by decision stage and by outcome scenario) makes them easier to decompose into smaller subproblems which may then be solved simultaneously.

The opportunities arising from approaching SMIP problems by means of decomposition has prompted the development of several different theoretical and algorithmic approaches. For example, the Integer L-Shaped method \cite{LaporteLSH_IP1993} employs Benders' decomposition to achieve stage-wise decomposition of SMIP problems. Other algorithms employ Lagrangian duality to achieve scenario-wise decomposition, such as the Dual Decomposition algorithm \cite{CaroeDuDe1999} which uses Lagrangian dual bounds in a branch-and-bound framework, or Progressive Hedging (PH) \cite{RockafellarPH1991,LokkeTabuPH1996, WatsonWoodruff2011, Velizetal2015} which applies an Alternating-Direction-type method to the augmented Lagrangian dual problem.
Recent studies and applications of these methods include \cite{AnguloLSH_IP2016,GuoPHDD2015,LubinParDD2013,GadeLBPH2016} and references therein.

All of the above methods are based on the concept of duality, and therefore they must consider the \emph{duality gap} that may exist between the optimal solution values of the original (primal) problem and the dual problem. This duality gap is frequently nonzero in the context of non-convex problems, such as those with integer decision variables. If the duality gap for a particular problem is large, any algorithm based on that dual is unlikely to be effective \cite{ChenLD2010}.

Several possible approaches to modifying Lagrangian duality to deal with the duality gap which arises in non-convex problems have been considered in the literature. These approaches include $l_1$-like penalty functions \cite{ChenLD2010}, indicator augmenting functions \cite{LalithaIndicatorLD2010}, nonlinear Lagrangian functions \cite{YangNLLD2001}, and semi-Lagrangian duality \cite{BeltranSemiL2006}. With the exception of nonlinear Lagrangian functions, these approaches have not yet been widely exploited in terms of experimental investigation and practical applications despite numerous theoretical developments available in literature.

In this paper, we propose an alternative approach to deal with SMIP problems that builds upon recent theoretical results from \cite{BolandEberhardetal2015} and \cite{Feizollahi2016} showing that duality gaps can be diminished with the use of finite-valued penalties for specific class of penalty functions. We show that one can obtain reasonable penalty functions using \emph{positive bases} and that parallelisation can be obtained by the application of a block Gauss-Seidel approach. The combination of these two frameworks allows us to develop an efficient heuristics that is capable of providing solutions for large-scale SMIP problems. In terms of objective value quality and computational time, the developed approach is shown to be competitive with existing approaches such as PH, which, despite its heuristic nature in the context of SMIPs, has been relied upon as an efficient solution method  (see, for example \cite{RyanEtAl2013, PerboliEtAl2015, Velizetal2015}). Furthermore, the theoretical basis for PH does not apply for problems containing integer variables. On the other hand there exists some supporting theory for the PBGS approach which we present in this paper. This partial theory has the potential to inform directions for the further improvement of heuristic methods of this kind. 

This paper is structured as follows. In Section \ref{sec2} we cover the technical background which will be used in the development of the proposed method. Section \ref{sec3} describes the development of the penalty-based block Gauss-Seidel method, while in Section \ref{sec4} we discuss computational aspects of the algorithm. Section \ref{sec5} provides results of the numerical experiments performed. Finally, in Section \ref{sec6} we provide conclusions and directions for further development of this research.

\section{Technical Background} \label{sec2}

In the following developments, we consider two-stage stochastic mixed-integer programming problems of the form:
\begin{align}
\zeta^{SIP}:= ~ \min_{x,y} \ &c^\top x + \sum_{s \in S}p_s(q_s^\top y_s) \label{OriginalDE_start}\\
\text{s.t.: } &x \in X \\
&y_s \in Y_s(x), \ \forall s \in S, \label{OriginalDE_end} 
\end{align}
where $x \in \mathbb{R}^{n_x}, y \in \mathbb{R}^{n_y\times|S|}$ are decision variables, $c \in \mathbb{R}^{n_x}$ and $q \in \mathbb{R}^{n_y \times |S|}$ are input parameters, and $p_s \in \mathbb{R}^{|S|}$ represents the scenario probabilities. Sets $X \subset \mathbb{R}^{n_x}$ and $Y_s(x) \subset \mathbb{R}^{n_y \times |S|}$ define the feasible decision set and consist of linear constraints and integrality restrictions on $x$ and $y$.%, respectively.

To obtain a formulation that is amenable to decomposition, i.e., that can be exploited in terms of its block-angular structure, $\zeta^{SIP}$ may be equivalently rewritten as:
\begin{flalign}
\zeta^{SIP} := ~ \min_{x,y,z} \ &\sum_{s \in S}p_s(c^\top x_s + q_s^\top y_s) \label{SplitVarStart}\\
\text{s.t.: } 
&x_s - z = 0, \ \forall s \in S \label{NAC} \\
&x_s \in X, \ \forall s \in S  \\
&y_s \in Y_s(x_s), \ \forall s \in S. \label{SplitVarEnd}
\end{flalign}
The set of constraints represented by \eqref{NAC} are referred to in the relevant literature as the non-anticipativity constraints (NAC), as they enforce consensus over the decision made before the observation of a given scenario $s \in S$. It is straightforward to see that these constraints prevent the problem from being solved by means of a decomposition approach that can exploit the otherwise block-angular structure of the problem. Hence, a natural way to approach this class of problems consists of relying on frameworks that are capable of considering relaxations for $\zeta^{SIP}$ which does not include constraint \eqref{NAC}.

\subsection{Lagrangian Relaxation}

A natural approach to solve this problem is to relax the NAC by means of Lagrangian relaxation. To achieve this, let $\lambda = (\lambda_s)_{s \in S}$ be the Lagrangian multipliers associated with the constraints \eqref{NAC}. Then, our Lagrangian relaxation can be formulated as the following problem:
\begin{flalign*}
\zeta^{LR}(\omega) := ~ \min_{x,y,z} \ &\sum_{s \in S}p_s L_s(x_s,y_s,z,\omega) \\
\text{s.t.: } &x_s \in X, \ \forall s \in S  \\
&y_s \in Y_s(x_s), \ \forall s \in S,
\end{flalign*}
where $\omega := (\omega_s)_{s \in S} = \left(\frac{\lambda_s}{p_s}\right)_{s \in S}$ and 
\begin{equation}
L_s(x_s,y_s,z,\omega) := c^\top x_s + q_s ^\top y_s + \omega_s^\top(x_s-z). \label{originalLag}
\end{equation}
In order to guarantee that $\zeta^{LR}(\omega)$ has a bounded optimal solution, one must enforce that the dual feasibility condition $\omega \in \Omega := \{\omega  \mid \sum_{s \in S} p_s^\top \omega_s = 0\}$ holds. Under this assumption, \eqref{originalLag} may be rewritten as 
\begin{equation}
L_s(x_s,y_s,z,\omega) = (c + \omega_s)^\top x_s + q_s ^\top y_s,
\end{equation}
which forces the $z$ variable to vanish (note that the term $-\omega_s^\top z$ was in fact the potential cause of the unboundedness of the dual relaxation $\zeta^{LR}(\omega)$) and its removal yields complete separability for each $s \in S$. 

It is well known that $\zeta^{LR}(\omega) \leq \zeta^{SIP}$ for any $\omega \in \Omega$. The \emph{Lagrangian dual problem} consists of finding the $\omega$ which causes $\zeta^{LR}(\omega)$ to most closely approximate or bound $\zeta^{SIP}$ from below, which in practice means solving the problem  
\begin{equation}
\zeta^{LD}  := ~ \max_{\omega \in \Omega} \ \zeta^{LR}(\omega).
\end{equation}
In general only the weak duality condition $\zeta^{LR}(\omega) \le \zeta^{SIP}$ holds. When equality $\zeta^{LD} = \zeta^{SIP}$ holds, we have strong duality.
Due to the presence of integer restricted variables, the primal problem~\eqref{SplitVarStart}-\eqref{SplitVarEnd} is not convex, and strong duality (that is $\zeta^{LD} = \zeta^{SIP}$) cannot be guaranteed. Instead, we typically have a duality gap i.e. $\zeta^{LD}  < \zeta^{SIP}$. 
%This is due to the non-convexity imposed in the feasibility space caused by such variables, which is on the other hand a necessary condition for strong duality.

\subsection{Augmented Lagrangian Approach}

In the particular domain of mixed-integer problems such as SMIP problems, there has been renewed interest in the use of augmented Lagrangian approaches \cite{BurachikEtAl2015, FeizollahiEtAl2015, GeisslerEtAl2015}. The augmented Lagrangian relaxation of $\zeta^{SIP}$ which relaxes the NAC \eqref{NAC} is:
\begin{flalign}
%\zeta^{LR+}_\rho (\omega) := ~ \min_{x,y,z, \omega} \ &\sum_{s \in S}p_s L^s_{\rho}(\omega) \label{ALDualFunctionStart}\\
\zeta^{LR+}_\rho (\omega) := ~ \min_{x,y,z} \ &\sum_{s \in S}p_s L^s(x_s,y_s,z,\omega) + \psi_{\rho}^s(x_s-z) \label{ALDualFunctionStart}\\
\text{s.t.: } &x_s \in X, \ \forall s \in S   \\
&y_s \in Y_s(x_s), \ \forall s \in S, \label{ALDualFunctionEnd}
\end{flalign}
where $\omega = (\omega_s)_{s \in S} \in \Omega$ and $\psi_{\rho}^s : \mathbb{R}^{n_x} \mapsto \mathbb{R}$ is an appropriate penalty function specific to scenario $s$ that depends on the penalty parameter $\rho$.
As in the ordinary Lagrangian relaxation, $\omega \in \Omega$ implies that $\sum_{s \in S} p_s^\top \omega_s = 0$ so as to ensure that the dual problem has a finite optimal value. The \emph{augmented Lagrangian dual problem} is:
\begin{align*}
\zeta^{LD+}_\rho := \max_{\omega \in \Omega} \ &\zeta^{LR+}_\rho(\omega). 
\end{align*}

A common choice for  the penalty function, in this context, is $\psi_\rho^s(u_s) := \frac{\rho}{2} ||u_s||_2^2$ for each $s \in S$, which provides smoothness to the original scenario-wise augmented Lagrangian dual function~\cite{Rockafellar1976MO,Bertsekas1982,Bertsekas1999}.  

Recent results have shown that the augmented Lagrangian dual is capable of asymptotically achieving zero duality gap when the weight $\rho$ associated with the penalty function is allowed to go to infinity \cite[Prop. 3]{BolandEberhardetal2015}, \cite[Prop. 2 ]{Feizollahi2016}. However, despite the theoretical relevance of this observation, it is not practically meaningful to deal with large-valued penalty parameters, in large part due to the associated numerical issues that arise. 

Furthermore, \cite[Cor. 1]{BolandEberhardetal2015}, \cite[Thm. 4 ]{Feizollahi2016} demonstrates that it is possible to circumvent this drawback if the augmentation of the Lagrangian dual is made using a norm as the penalty function. In this case, the theory suggests that it is possible to attain strong duality for a finite value of $\rho$. This result is one of the major motivations for the developments to be presented next.

\subsection{Semi-Lagrangian Duality}

Semi-Lagrangian duality \cite{BeltranSemiL2006} is a variant of Lagrangian duality in which "difficult" equality constraints (e.g. $Ax = b$) are reformulated as pairs of inequality constraints ($Ax \leq b$ and $Ax \geq b$). Lagrangian relaxation is then applied to one of the two sets of inequality constraints. Surrogate semi-Lagrangian duality \cite{MonabbatiSemiL2014,JornstenSemiL2015} is a variant which replaces one set of inequalities with its weighted sum ($\lambda^\top Ax \leq \lambda^\top b$, where $\lambda \geq 0$ is a non-negative vector of the weights applied to each inequality) and then applies Lagrangian relaxation to the resulting single inequality. Special classes of problems can exhibit zero duality gap when utilising the semi-Lagrangian dual problem.

The semi-Lagrangian approach is effective when the semi-Lagrangian dual problem is more tractable than the original problem, even though some inequalities remain in the dual problem as explicit constraints. For problems to which semi-Lagrangian relaxation has been previously applied, such as the  p-median problem \cite{BeltranSemiL2006} and the uncapacitated facility location problem  \cite{BeltranRoyoSL_UCAP2012}, the semi-Lagrangian dual problem may be simplified by choosing appropriate dual variable values. 

The method presented in this paper is similar to semi-Lagrangian duality in that it can be interpreted as a penalty-method analogue of applying Lagrangian duality to a reformulation of the original problem in which equalities are rewritten as inequalities. To attain this objective, let us first reformulate the problem \eqref{OriginalDE_start}-\eqref{OriginalDE_end} into the following equivalent form:
\begin{flalign}
\zeta^{SIP} : ~ \min_{x,y,z} \ &\sum_{s \in S}p_s(c^\top x_s + q_s^\top y_s) \nonumber\\
\text{s.t.: } &x_s - z \leq 0, \ \forall s \in S \label{leftNAC} \\
&-(x_s - z)  \leq 0, \ \forall s \in S \label{rightNAC}\\
&x_s \in X, \ \forall s \in S  \nonumber \\
&y_s \in Y_s(x_s), \ \forall s \in S. \nonumber
\end{flalign}
Unfortunately, in the context of the SMIP problems studied in this paper, the inequalities left as explicit constraints by semi-Lagrangian duality would not allow us to achieve our goal of scenario-wise decomposition. 

The method presented here instead relaxes inequalities \eqref{leftNAC} and \eqref{rightNAC}, 
so that penalty-based approaches can treat the deviations from the separate inequalities differently.
Furthermore, the resulting dual problem is separable by scenario within the block Gauss-Seidel framework. As will be observed later, choosing penalty functions based on some positive bases can result in penalty terms analogous to the objective terms obtained through surrogate semi-Lagrangian duality.

\subsection{Desirable Properties of Penalty Functions}

Our primary objective is to compute $\zeta^{LD+}_\rho$ in a decomposed manner, which will require: $i)$ the definition of a suitable penalty functions $\psi_\rho^s$ for each $s \in S$; and
$ii)$ the application of a block Gauss-Seidel (GS)-based approach based on a decomposable structure.

One important result, originally proven for a general mixed-integer programming (MIP) problems, which can be used in this case is Theorem 5 of \cite{Feizollahi2016}, which is reproduced below (adapted to the context of SMIP problems).%, for the sake of completeness.

\begin{theorem}\cite[Thm. 5 ]{Feizollahi2016} \label{Thm1}
Consider a feasible MIP problem given in \eqref{OriginalDE_start}-\eqref{OriginalDE_end} whose problem data is formed from rational entries and with its optimal value bounded. 
If $\psi : \prod_{s \in S} \mathbb{R}^{n_x} \mapsto \mathbb{R}$ is a summed augmenting function $\psi(u) := \sum_{s \in S} \psi_\rho^s(u_s)$ for problem~\eqref{ALDualFunctionStart}--\eqref{ALDualFunctionEnd} such that 
\begin{enumerate}
\item $\psi(0) = 0$
\item $\psi(u) \geq \delta > 0, \forall u \not\in V$
\item $\psi(u) \geq \gamma ||u||_\infty, \forall u \in V$
\end{enumerate}
for some open neighbourhood $V$ of $0$, and positive scalars $\delta, \gamma > 0$,
then there exists a finite $\rho$ such that $\zeta^{LD+}_\rho = \zeta^{LR+}_{\rho} (\omega_{LP}) = \zeta^{SIP}$, for $\omega_{LP}$ (an optimal multiplier of the linear programming relaxation of the NACs (\ref{NAC})). 
\end{theorem}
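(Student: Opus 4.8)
The plan is to reduce the whole statement to the single inequality $\zeta^{LR+}_\rho(\omega_{LP}) \ge \zeta^{SIP}$ for one finite $\rho$. Weak duality supplies the reverse: at any primal-feasible $(x,y,z)$ we have $x_s - z = 0$, so $\psi_\rho^s(0)=0$ by condition~1 and the multiplier term vanishes, whence $\zeta^{LR+}_\rho(\omega) \le \zeta^{SIP}$ for every $\omega\in\Omega$ and every $\rho$. Consequently $\zeta^{LR+}_\rho(\omega_{LP}) \le \zeta^{LD+}_\rho \le \zeta^{SIP}$, and proving $\zeta^{LR+}_\rho(\omega_{LP}) \ge \zeta^{SIP}$ forces all three quantities to coincide.

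First I would recast $\zeta^{LR+}_\rho(\omega_{LP})$ through the primal perturbation (value) function. Since $\omega_{LP}\in\Omega$ the $z$-linear contributions cancel, leaving $z$ only inside the penalty; writing $u=(x_s-z)_{s\in S}$ for the vector of non-anticipativity violations and
\[
v(u) := \min\Big\{ \textstyle\sum_{s\in S} p_s(c^\top x_s + q_s^\top y_s) \ :\ x_s - z = u_s,\ x_s \in X,\ y_s \in Y_s(x_s) \Big\},
\]
one has $v(0)=\zeta^{SIP}$, and after folding the multiplier into the objective the target becomes $v(u) + \langle\bar\omega_{LP},u\rangle + \psi_\rho(u) \ge \zeta^{SIP}$ for all $u$, where $\bar\omega_{LP}=(p_s\omega_{LP,s})_{s\in S}$. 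The decisive use of $\omega_{LP}$ is that, being an optimal multiplier of the LP relaxation of the NAC, it is a subgradient at $0$ of the convex LP value function $v_{LP}\le v$; this provides the global affine minorant $v(u)+\langle\bar\omega_{LP},u\rangle \ge \zeta^{LP}$, with $\zeta^{LP}$ the optimum of the LP relaxation of $\zeta^{SIP}$.

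Next I would verify the target on two regimes separated by the neighbourhood $V$. Outside $V$, combining the global minorant with condition~2 — under which the scaled penalty obeys $\psi_\rho(u)\ge\rho\delta$ — and choosing $\rho\ge(\zeta^{SIP}-\zeta^{LP})/\delta$ lets the penalty alone absorb the finite LP-to-MIP gap, giving $v(u)+\langle\bar\omega_{LP},u\rangle+\psi_\rho(u)\ge\zeta^{LP}+(\zeta^{SIP}-\zeta^{LP})=\zeta^{SIP}$. Inside $V$, I would instead require the local linear minorant
\[
v(u) + \langle \bar\omega_{LP}, u\rangle \ \ge\ \zeta^{SIP} - L\,\norm{u}_\infty,
\]
for a finite $L$; condition~3, under which $\psi_\rho(u)\ge\rho\gamma\norm{u}_\infty$, then dominates this descent once $\rho\ge L/\gamma$. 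Taking $\rho$ as the maximum of the two finite thresholds closes the argument and upgrades the asymptotic ($\rho\to\infty$) zero-gap result to a finite penalty.

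The hard part will be establishing the local linear minorant, and this is precisely where rationality of the data is indispensable. The obstruction is that the MIP value function $v$ is only lower semicontinuous at $0$ and can fall below $\zeta^{SIP}$ for arbitrarily small $u\neq 0$, so one must rule out any faster-than-linear descent. I would argue that on a neighbourhood of $0$ the function $v$ is the pointwise minimum of finitely many LP value functions indexed by the admissible integer configurations; each piece is convex and polyhedral, hence Lipschitz with a common finite modulus $M$, while rationality forces a strictly positive gap $\eta$ between $\zeta^{SIP}$ and the value of any strictly suboptimal configuration. The uniform Lipschitz descent on the optimal pieces, together with the gap $\eta$ that keeps suboptimal pieces above $\zeta^{SIP}-L\norm{u}_\infty$ for small $u$, yields the finite $L$. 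Making precise the finiteness of locally relevant configurations, the positivity of the rational gap, and the bookkeeping with the consensus variable $z$ and the probabilities $p_s$ is the technical core of the proof.
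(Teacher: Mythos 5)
You should first be aware that the paper does not actually prove this theorem: its entire proof is the single sentence ``Apply the general theorem \cite[Thm.~5]{Feizollahi2016} to our problem \eqref{OriginalDE_start}--\eqref{OriginalDE_end}'', i.e.\ the result is imported from the cited reference and merely specialised so that the relaxed constraints are the NACs \eqref{NAC} and the augmenting function is the scenario-wise sum $\psi(u)=\sum_{s\in S}\psi^s_\rho(u_s)$. Your proposal therefore takes a genuinely different route: it attempts to reconstruct the proof of the cited theorem itself. The skeleton of that reconstruction is sound and matches the standard argument for exact augmented-Lagrangian duality in MILP: weak duality reduces everything to $\zeta^{LR+}_\rho(\omega_{LP}) \ge \zeta^{SIP}$; expressing the dual function through the perturbation function $v(u)$ and using that $\omega_{LP}$ is an optimal LP multiplier (hence gives a subgradient of the convex LP value function $v_{LP}\le v$ at $0$) yields the global minorant $v(u)+\langle\bar\omega_{LP},u\rangle\ge\zeta^{LP}$; condition~2 then absorbs the finite gap $\zeta^{SIP}-\zeta^{LP}$ outside $V$ once $\rho\ge(\zeta^{SIP}-\zeta^{LP})/\delta$, and condition~3 handles a neighbourhood of the origin provided $v$ cannot descend faster than linearly there. (One small repair: your local minorant will generally be available only on some smaller ball $V'\subset V$, so a third regime $V\setminus V'$ is needed; it is routine, since there $\rho\psi(u)\ge\rho\gamma\norm{u}_\infty$ is bounded below by a positive constant that grows with $\rho$.)

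The genuine gap is in your justification of the local linear minorant, which is exactly the step carrying all the difficulty. You argue that near $u=0$ the MIP value function $v$ is the pointwise minimum of \emph{finitely many} polyhedral LP value functions (one per admissible integer configuration) with a common Lipschitz modulus, and that rationality yields a positive gap $\eta$ separating $\zeta^{SIP}$ from the values of suboptimal configurations. Neither claim holds at the stated level of generality: the theorem allows general (unbounded) integer variables, so infinitely many integer configurations can be relevant in every neighbourhood of $0$; the pointwise minimum is then over infinitely many pieces, no common Lipschitz constant is automatic, and the infimum of the suboptimal configuration values may equal $\zeta^{SIP}$, destroying the gap $\eta$ --- rationality alone, as an elementary arithmetic fact, does not prevent this. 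What rescues the argument is the Blair--Jeroslow structure theory of rational MILP value functions (lower semicontinuity together with a piecewise-polyhedral, finitely generated representation), which is precisely the machinery the cited reference builds on. Your sketch is valid as written for the pure-binary instances used in the paper's experiments, where the configuration set is finite, but to prove the theorem as stated you must either invoke that structure theory or add a boundedness assumption on the integer feasible set.
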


\begin{proof}
Apply the general theorem \cite[Thm. 5 ]{Feizollahi2016} to our problem  \eqref{OriginalDE_start}-\eqref{OriginalDE_end}. 
\end{proof}

\begin{remark}\label{BolandEberhardRem2}
	In \cite{BolandEberhardetal2015}  other conditions that do not require the assumption of rationality of the data defining the problem are given that also ensure a limiting zero duality gap. 
\end{remark}
\begin{remark}\label{BolandEberhard}
One may see with little difficulty that the proof of \cite{Feizollahi2016} does not rely on the setting of $\omega = \omega_{LP}$. Indeed one can show that for any $\omega$ there still exists a finite (possibly larger) penalty parameter such that Theorem \ref{Thm1} holds true. 
\end{remark}

In \cite{BolandEberhardetal2015} and later in \cite{Feizollahi2016} it has been observed that a zero duality gap is achievable for dual problems based on an augmented Lagrangian in MIP problems. In both papers, very general classes of augmenting functions were studied and consequently very little can be inferred as to what would be a practical penalty that one could use on a given problem. It is observed in \cite{Feizollahi2016} that the usual quadratic (squared norm) penalty is probably not a practical choice for MIP. One would hope that an augmenting function would lead to a reformulation of the MIP that is not significantly worse to solve than the original problem, which would mean that augmenting functions should lead to a MIP reformulation. 

Motivated by the aforementioned facts, we propose a class of augmenting functions based on the use of {\em positive basis} \cite{Davis1954}. 
One special case of this class of penalty functions is given as follows.
Given discrepancy vector $u:=(u_s)_{s \in S} \in \prod_{s \in S} \mathbb{R}^{n_x}$, we define for each scenario $s$ the penalty function 
$$\psi_{\rho}^s(u_s):=\underline{\rho}_s^\top[u_s]^- +  \overline{\rho}_s^\top [-u_s]^-,$$
where $\rho = (\underline{\rho}_s, \overline{\rho}_s)_{s \in S} \in \mathbb{R}^{2n_x |S|}_{>0}$ and $[ v ]^- :=  -\min\{0,  v \} $ (performed component wise), where in this case $v \in \mathbb{R}^{n_x}$.
Then we define
\begin{equation}
\psi_\rho(u):=\sum_{s \in S} \psi_{\rho}^s(u_s) = \left(\sum_{s \in S}\underline{\rho}_s^\top[u_s]^- +  \sum_{s \in S}\overline{\rho}_s^\top [-u_s]^-\right). \label{neqn:1}
\end{equation}
In the following developments we will demonstrate that (\ref{neqn:1}) satisfies the conditions of Theorem \ref{Thm1} and indeed lies in a special class of augmenting functions that form a practical set from which one can tailor make an augmenting function for a given problem. 

\subsection{Positive Basis}

A subset of reasonable augmenting functions may be defined by using a \emph{positive basis} $\{\text{\bf n}_1, \dots, \text{\bf n}_l\}$ to scalarise the deviations $u\in \mathbb{R}^{m}$. (Note that for our purposes, $m=n_x \,| S | $.)
Such deviations can be associated, for example, with the satisfaction of linear inequalities, where we might have $u=b-Ax$ given a constraint $Ax \le b$.% from any given linear  constraint $Ax= B$.

\begin{definition}
	We say a set of vectors $\left\{ \text{\bf n}_{1},\dots ,\ \text{\bf n}_{l}\right\}$ where ${m} +1\leq l \leq 2{m}  $ is a \emph{positive basis} for $\mathbb{R}^{m}$ if and only if every $u \in \mathbb{R}^{{m}}$ can be expressed as a positive combination of these vectors, i.e., there exists $\alpha_{i}\geq 0$ for $i=1,\dots ,l$ for which $u=\sum_{i=1}^{l}\alpha _{i}\bfemph{n}_{i}.$
\end{definition}

The following property of positive bases will be useful in the developments which follow.

\begin{theorem}\label{theorem:positive-dot-product} (\cite{Davis1954} Theorem 3.1) $\left\{\bfemph{n}_{1},\dots , \bfemph{n}_{l}\right\}$ positively spans $\mathbb{R}^{m}$ if and only if for every non-zero $u$ there exists an index $i$ such that $u\cdot \bfemph{n}_{i} > 0$.
\end{theorem}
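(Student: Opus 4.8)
The plan is to establish the two implications of the biconditional separately. Throughout, write $C := \cone\{\mathbf{n}_1,\dots,\mathbf{n}_l\}$ for the convex cone generated by the vectors, so that the phrase ``$\{\mathbf{n}_1,\dots,\mathbf{n}_l\}$ positively spans $\mathbb{R}^m$'' means exactly $C = \mathbb{R}^m$. With this notation the two conditions to be shown equivalent are $C = \mathbb{R}^m$ and ``for every nonzero $u$ there is an index $i$ with $u\cdot\mathbf{n}_i > 0$''.

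For the forward implication, suppose $C = \mathbb{R}^m$ and fix any nonzero $u$. Since $u \in C$, it can be written as $u = \sum_{i=1}^l \alpha_i \mathbf{n}_i$ with all $\alpha_i \geq 0$. Taking the inner product with $u$ gives $\norm{u}^2 = \sum_{i=1}^l \alpha_i\,(u\cdot \mathbf{n}_i)$, and the left-hand side is strictly positive because $u \neq 0$. A nonnegatively weighted sum that is strictly positive must contain at least one strictly positive summand, so there is an index $i$ with $\alpha_i > 0$ and hence $u\cdot \mathbf{n}_i > 0$. This direction is routine.

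For the reverse implication I argue by contraposition: assuming $\{\mathbf{n}_i\}$ fails to positively span, i.e. $C \subsetneq \mathbb{R}^m$, I produce a single nonzero $u$ with $u\cdot \mathbf{n}_i \leq 0$ for every $i$, which is precisely the negation of the dot-product condition. The key structural fact is that a finitely generated cone $C$ is closed and convex (Minkowski--Weyl), so I may choose a point $w \notin C$ and invoke the separating hyperplane theorem in its homogeneous (cone) form: there exists a nonzero vector $u$ with $u\cdot w > 0$ and $u\cdot v \leq 0$ for all $v \in C$. Since each generator satisfies $\mathbf{n}_i \in C$, this yields $u\cdot \mathbf{n}_i \leq 0$ for all $i$, completing the contrapositive.

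The main obstacle is the reverse direction, and specifically obtaining a separator that is nonpositive on the \emph{entire} cone rather than merely strictly separating the single point $w$; this is what requires the closedness of $C$ together with the homogeneous version of separation, equivalently the bipolar identity $C^{\circ\circ} = C$ valid for closed convex cones. Once closedness is secured, the whole statement reduces to the observation that the polar cone $C^{\circ} = \{\,u : u\cdot \mathbf{n}_i \leq 0,\ \forall i\,\}$ equals $\{0\}$ if and only if $C = \mathbb{R}^m$, which is exactly the asserted equivalence.
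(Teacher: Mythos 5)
Your proof is correct, but there is nothing in the paper to compare it against: the paper states this result verbatim as Theorem 3.1 of \cite{Davis1954} and uses it as a black box, offering no proof of its own, so any complete argument is necessarily a different route. Your forward implication (writing a nonzero $u \in C := \cone\{\mathbf{n}_1,\dots,\mathbf{n}_l\} = \mathbb{R}^m$ as a nonnegative combination of the generators and pairing with $u$ itself, so that $\norm{u}^2 = \sum_i \alpha_i (u\cdot\mathbf{n}_i) > 0$ forces one strictly positive term) is the standard short argument and is airtight. Your reverse implication is also sound, and you correctly identify the two nontrivial ingredients: the closedness of the finitely generated cone $C$ (Minkowski--Weyl, or Carath\'eodory for cones) and the homogeneous form of the separating hyperplane theorem, which together yield, whenever $C \neq \mathbb{R}^m$, a nonzero $u$ with $u\cdot v \le 0$ for all $v \in C$; restricting to the generators $\mathbf{n}_i \in C$ negates the dot-product condition exactly. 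Your closing reformulation is also accurate: the theorem is precisely the statement that for the closed convex cone $C$ one has $C = \mathbb{R}^m$ if and only if the polar cone $C^{\circ} = \{u : u\cdot\mathbf{n}_i \le 0, \ i = 1,\dots,l\}$ equals $\{0\}$, which follows from the bipolar identity $C^{\circ\circ} = C$. What your approach buys is self-containedness and a clean isolation of where closedness is genuinely needed (to get a separator nonpositive on the \emph{whole} cone, not merely strictly separating a single point); what the paper's bare citation buys is brevity, deferring to Davis's original treatment. One minor remark: the paper's surrounding definition of a positive basis also imposes the cardinality bounds $m+1 \le l \le 2m$, but the theorem as stated concerns only positive spanning, and your proof correctly neither uses nor needs those bounds.
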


Let $e_i$, $i=1,\dots,m$ represent the elementary unit vectors of $\mathbb{R}^{m}$ with entry $i$ set to one and all other entries set to zero. Examples of positive bases on $\mathbb{R}^{m}$ include:
\begin{itemize}
	\item The vertices of a ${m}$-simplex (generalised tetrahedron), centred at the origin.
	\item The set of vectors $\{+e_i\}^{m}_{i=1} \cup \{ \sum_{i=1}^{m} - e_i\}$
	\item The set of vectors $\{\pm e_i\}^{m}_{i=1}$
\end{itemize}

\subsection{Norm-Like Augmenting Functions}

As noted in both \cite{BolandEberhardetal2015} and \cite{Feizollahi2016}, norms are viable augmenting functions with appealing theoretical support for overcoming duality gaps. However, norms have the disadvantage that they uniformly penalise constraint violations, which 
results in a loss of flexibility and precision in the fine-tuning of the penalisation.
%ultimately represents losing information which can be useful for appropriately increasing penalties in penalty-based methods. 
This limitation motivates the following development of asymmetrical but norm-like augmenting functions. The polyhedral norms $\| \ {\cdot} \ \|_\infty$ and $ \| \ {\cdot} \ \|_1$ may be represented using the positive basis $\{\pm e_i\}^{m}_{i=1}$ in the following ways:
\begin{eqnarray}
\psi_\infty(u) := \|{u}\|_\infty &=& \max_{i=1,\dots,{m}} \{ \pm e_i^\top u\}, \text{ and} \label{eq:pbnorminf} \\ 
\psi_1(u) := \| u \|_1 &=& \sum_{i=1}^{m}  \max \{ +e_i^\top u, 0 \} + \sum_{i=1}^{m} \max \{ - e_i^\top u, 0 \}  \label{eq:pbnormone} \\ 
\text{ or equivalently } \| u\|_1 &=&  \sum_{i=1}^{m} \max \left\{ \nu_i^\top u : \nu_i \in \{+e_i, -e_i \} \right\}. \label{eq:pbnormonealt}
\end{eqnarray}
The representation in \eqref{eq:pbnormonealt} relies on each vector in the basis having a negative multiple which is also in the basis. The representations in \eqref{eq:pbnorminf} and \eqref{eq:pbnormone} do not have this limitation, and may be generalised to any positive basis $N := \{\text{\bf n}_1, \dots, \text{\bf n}_l \}$ as follows:

\begin{eqnarray}\label{eq:penFinf}
\psi_{\infty}^{N}(u) &:=& \max_{i=1,\dots,l} \{ \text{\bf n}_i^\top u \} \quad \text{ and}  \label{neqn:2}\\
\label{eq:penFone}
\psi_{1}^{N}(u) &:=& \sum_{i=1}^{l} \max \{ \text{\bf n}_i^\top u, 0 \}. \label{neqn:3}
\end{eqnarray}
%Indeed one may construct functions based on a combination of these operations, treating different index sets differently. 
The functions $\psi_{\infty}^{N}$ and $\psi_{1}^{N}$ are not necessarily norms, but do share some useful properties with norms. Specifically, these functions are positive homogeneous (which implies that they vanish at zero), strictly positive for all $u \in \mathbb{R}^{m} \setminus \{0\}$, finite valued, sub-additive and coercive.

The proposed augmenting function $\psi_\rho(u)$ given in (\ref{neqn:1}) may be represented in the form of (\ref{neqn:3}), using the positive basis $N_\rho = \{\overline{\rho}_{s,i} e_{i+(s-1)n_x} \mid s \in S, i \in \{1,\dots,{n_x}\} \} \cup \{- \underline{\rho}_{s,i} e_{i+(s-1)n_x} \mid s \in S, i \in \{1,\dots,{n_x}\} \}$, as follows:

%We may view the proposed augmenting function given in (\ref{neqn:1}) as a member of this class

%Indeed we have a combination of both forms for a penalty on the space $\mathbb{R}^{|\Omega| \times {n_x}}$:

\begin{eqnarray}
\psi_{1}^{N_\rho}(u) &=& \sum_{s \in S} \sum_{i=1,\dots,{n_x}} \overline{\rho}_{s,i}  \max\{0, u_{s,i}\} + \sum_{s \in S}\sum_{i=1,\dots,{n_x}}  \underline{\rho}_{s,i}  \max\{0, -u_{s,i}\} \nonumber \\
&=& \left(\sum_{s \in S}\underline{\rho}_s^\top[u_s]^- +  \sum_{s \in S}\overline{\rho}_s^\top [ - u_s]^-\right) \label{equation:phi-mu-represented-as-phi-1}\\
&=& \sum_{s \in S} \psi_{\rho}^s(u) = \psi_\rho(u) \nonumber
\end{eqnarray}

%\begin{eqnarray}
%\psi_{\mu}(x) &=&  \sum_{s \in S} \sum_{i=1,\dots,{n_x}} \max \left\{ \epsilon_i^{\top} x_s : \epsilon_i \in \{+\overline{\mu}_{s,i} e_i, -\underline{\mu}_{s,i} e_i \} 
% \right\} \nonumber \\
% &=& \sum_{s \in S} \sum_{i=1,\dots,{n_x}} \overline{\mu}_{s,i}  \max\{0, x_{s,i}\} + \sum_{s \in S}\sum_{i=1,\dots,{n_x}}  \underline{\mu}_{s,i}  \max\{0, -x_{s,i}\} \nonumber \\
% &=& \left(\sum_{s \in S}\underline{\mu}_s^\top[x_s]^- +  \sum_{s \in S}\overline{\mu}_s^\top [ - x_s]^-\right).
% \label{neqn:12}
%\end{eqnarray}

\begin{lemma}\label{lemma:positive-homogeneous-equivalence}If two functions $\psi_A$ and $\psi_B$ are positive homogeneous, strictly positive for all $u \neq 0$, and are finite valued then there exists a finite $\gamma > 0$ such that
	\begin{equation}\label{eq:positive-homogeneous-equivalence}
	\psi_A(u) \geq \gamma \psi_B(u) \quad \text{ for all  } u\in \mathbb{R}^{m}
	\end{equation}
\end{lemma}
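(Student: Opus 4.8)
The plan is to exploit positive homogeneity to reduce the inequality to the Euclidean unit sphere, and then invoke compactness. First I would observe that at $u = 0$ positive homogeneity forces $\psi_A(0) = \psi_B(0) = 0$, so \eqref{eq:positive-homogeneous-equivalence} holds trivially there for any $\gamma$. For $u \neq 0$, writing $u = \|u\|_2\,\hat u$ with $\hat u := u/\|u\|_2$ on the unit sphere $\mathbb{S} := \{v : \|v\|_2 = 1\}$, positive homogeneity gives $\psi_A(u) = \|u\|_2\,\psi_A(\hat u)$ and $\psi_B(u) = \|u\|_2\,\psi_B(\hat u)$. Hence the desired inequality is equivalent, after dividing by $\|u\|_2 > 0$, to $\psi_A(\hat u) \geq \gamma\,\psi_B(\hat u)$ for every $\hat u \in \mathbb{S}$; that is, it suffices to establish the bound on the compact set $\mathbb{S}$.

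Next I would set
\begin{equation*}
a := \min_{v \in \mathbb{S}} \psi_A(v), \qquad b := \max_{v \in \mathbb{S}} \psi_B(v),
\end{equation*}
and take $\gamma := a/b$. For this to be a legitimate, strictly positive, finite choice I need three facts: that the extrema are attained, that $a > 0$, and that $0 < b < \infty$. Strict positivity of $\psi_A$ and $\psi_B$ gives $\psi_A(v), \psi_B(v) > 0$ for every $v \in \mathbb{S}$ (since $0 \notin \mathbb{S}$), so once the extrema are attained we obtain $a > 0$ and $b > 0$, while finite-valuedness gives $b < \infty$. It then follows immediately that for $v \in \mathbb{S}$ we have $\psi_A(v) \geq a = \gamma b \geq \gamma\,\psi_B(v)$, and rescaling by $\|u\|_2$ recovers \eqref{eq:positive-homogeneous-equivalence} for all $u$.

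The main obstacle is justifying that the minimum of $\psi_A$ and the maximum of $\psi_B$ over $\mathbb{S}$ are actually attained (equivalently, that $a>0$ and $b<\infty$): the three hypotheses as literally stated — positive homogeneity, strict positivity, and finite-valuedness — are not by themselves enough, since a positively homogeneous finite function can be unbounded, or have infimum zero, on $\mathbb{S}$. What makes the argument work is continuity of $\psi_A$ and $\psi_B$ together with compactness of $\mathbb{S}$, via the extreme value theorem. Continuity is available here because the functions to which the lemma is applied — the norm-like augmenting functions $\psi_\infty^N$ and $\psi_1^N$ and the polyhedral norms — are sublinear (positively homogeneous and sub-additive), and a finite-valued sublinear function on all of $\mathbb{R}^m$ is convex and hence continuous. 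I would therefore record continuity (equivalently, sub-additivity) of $\psi_A$ and $\psi_B$ as the property that powers the compactness step, after which the extreme value theorem closes the argument.
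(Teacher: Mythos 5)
Your argument follows essentially the same route as the paper's proof: both exploit positive homogeneity to reduce the claim to the (compact) unit sphere and then appeal to compactness to produce $\gamma$. The only cosmetic difference is the constant: the paper takes $\gamma = \min_{\|v\|=1} \psi_A(v)/\psi_B(v)$, whereas you take $\gamma = \bigl(\min_{\|v\|=1}\psi_A(v)\bigr)/\bigl(\max_{\|v\|=1}\psi_B(v)\bigr)$, which is a possibly smaller but equally valid choice.

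What deserves emphasis is that the obstacle you flag is genuine, and the paper's own proof does not address it: the paper asserts that the minimum of the ratio ``exists and $\gamma$ is strictly positive and finite'' purely from strict positivity, finite-valuedness, and compactness of the sphere, and that inference is invalid. Indeed, for $m \geq 2$ the lemma is false as literally stated: pick distinct unit vectors $v_n$, set $h(v_n) = 1/n$ and $h(v)=1$ for all other unit vectors, and extend positively homogeneously by $\psi_A(u) := \|u\|_2\, h(u/\|u\|_2)$, $\psi_A(0):=0$. This $\psi_A$ is positively homogeneous, strictly positive off the origin, and finite valued, yet with $\psi_B = \|\cdot\|_2$ no $\gamma>0$ satisfies \eqref{eq:positive-homogeneous-equivalence}, since $\psi_A(v_n)/\psi_B(v_n) \to 0$. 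The missing ingredient is exactly the one you identify: continuity (lower semicontinuity of $\psi_A$ and upper semicontinuity of $\psi_B$ would also do), which for every function the lemma is actually applied to --- $\psi_{\infty}^N$, $\psi_{1}^N$, the polyhedral norms, and the sums/maxima of the $g_i$ in Remark \ref{Remark8} --- follows from finite-valued sublinearity: the paper lists sub-additivity among their properties in the paragraph preceding the lemma but omits it from the lemma's hypotheses. So your proof, with continuity (or sub-additivity) recorded as an explicit hypothesis, is the correct repair and is strictly more careful than the paper's own argument; the lemma's statement should be amended accordingly.
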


\begin{proof}
Since they are positive homogeneous, $\psi_A$ and $\psi_B$ vanish at zero and so \eqref{eq:positive-homogeneous-equivalence} trivially holds with equality at $u=0$.
To obtain the required inequality in \eqref{eq:positive-homogeneous-equivalence} for nonzero $u$, set $V = \{ u : ||u|| = 1\}$ (where $||\cdot||$ is any norm) and take $\gamma = \min_{u \in V} \frac{\psi_A(u)}{\psi_B(u)}$. Since $\psi_A(u)$ and $\psi_B(u)$ are strictly positive and finite for all $u \neq 0$, and $V$ is compact, this minimum exists and $\gamma$ is strictly positive and finite. For any point $u \in \mathbb{R}^{m} \setminus \{0\}$, $||u||$ is strictly positive and the point $\frac{u}{||u||}$ is in $V$. The required inequality follows from the positive homogeneity of $\psi_A$ and $\psi_B$:
% If you remove the \alpha and just work with the norm in the following equation it overflows the page, so don't do that.
\begin{multline*}
\psi_A(u) = {||u||} \psi_A\left(\frac{u}{{||u||}}\right) = {||u||} \frac{\psi_A\left(\frac{u}{||u||} \right)}{\psi_B\left(\frac{u}{||u||} \right)} \psi_B\left(\frac{u}{||u||} \right) \geq \gamma{||u||} \psi_B\left(\frac{u}{||u||} \right) = \gamma \psi_B\left(u\right).
\end{multline*}
\end{proof}
 
\begin{proposition}\label{pospen}
	For any positive basis $N$, the augmenting functions $\psi_{\infty}^N$ and $\psi_{1}^N$ given in \eqref{neqn:2} and \eqref{neqn:3} respectively satisfy the conditions given in Theorem \ref{Thm1}.
	%	Both the augmenting functions given in (\ref{neqn:2})  and (\ref{neqn:3}) (or any combination of these forms) satisfy the conditions given in Theorem \ref{Thm1} and hence, assuming the data defining the problem is rational and its optimal value is finite, then  $\zeta^{LD+}_\rho = \zeta^{LR+}_{\rho} (\omega_{LP}) = \zeta^{SIP}$, for $\omega_{LP}$ the optimal multiplier of the linear programming relaxation of (\ref{NAC}). 
\end{proposition}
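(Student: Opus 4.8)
The plan is to check the three conditions of Theorem~\ref{Thm1} one at a time, relying on the properties of $\psi_\infty^N$ and $\psi_1^N$ recorded immediately after~\eqref{neqn:3} together with Lemma~\ref{lemma:positive-homogeneous-equivalence}. Both functions are positive homogeneous, finite valued, and strictly positive on $\mathbb{R}^m \setminus \{0\}$. The last of these is the only property that genuinely uses the positive-basis structure: by Theorem~\ref{theorem:positive-dot-product}, for any nonzero $u$ there is an index $i$ with $\text{\bf n}_i^\top u > 0$, which forces $\psi_\infty^N(u) \geq \text{\bf n}_i^\top u > 0$ and, since that same term survives in the sum, $\psi_1^N(u) > 0$ as well.

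Condition~(1) is immediate: positive homogeneity gives $\psi(0) = \lambda\,\psi(0)$ for every $\lambda > 0$, forcing $\psi(0) = 0$. For condition~(3) I would apply Lemma~\ref{lemma:positive-homogeneous-equivalence} with $\psi_A$ taken to be $\psi_\infty^N$ (respectively $\psi_1^N$) and $\psi_B = \|\cdot\|_\infty$. Both functions meet the lemma's hypotheses --- positive homogeneous, strictly positive away from the origin, and finite valued --- so the lemma supplies a finite $\gamma > 0$ with $\psi(u) \geq \gamma \|u\|_\infty$ for every $u \in \mathbb{R}^m$. This is in fact stronger than condition~(3), which only requests the bound on some neighbourhood $V$ of the origin.

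Condition~(2) then follows from the global bound just obtained by choosing $V$ suitably. Fixing any $r > 0$ and setting $V = \{u : \|u\|_\infty < r\}$, every $u \notin V$ satisfies $\|u\|_\infty \geq r$, and hence $\psi(u) \geq \gamma \|u\|_\infty \geq \gamma r =: \delta > 0$. Since the same neighbourhood and the same $\gamma$ serve both functions, this settles the verification for $\psi_\infty^N$ and $\psi_1^N$ in parallel. The argument is short, and the only step carrying real content is the strict positivity on $\mathbb{R}^m \setminus \{0\}$, which is precisely what the positive-basis characterisation of Theorem~\ref{theorem:positive-dot-product} delivers; the remaining steps are routine bookkeeping around Lemma~\ref{lemma:positive-homogeneous-equivalence}, so I anticipate no serious obstacle.
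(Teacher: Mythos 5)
Your proof is correct; it draws on the same toolbox as the paper (Theorem~\ref{theorem:positive-dot-product}, positive homogeneity, and Lemma~\ref{lemma:positive-homogeneous-equivalence}) but assembles it differently. The paper proves conditions 2 and 3 for $\psi_{\infty}^{N}$ by an explicit compactness argument: it sets $\delta := \min_u \{ \max_{i=1,\dots,l}\{\text{\bf n}_i^\top u\} \mid \|u\|_\infty = \varepsilon\} > 0$ on the $\varepsilon$-sphere and uses homogeneity to propagate this bound outward to all $u \notin V$ (condition 2) and inward to all $u \in V$ (condition 3, with $\gamma = \delta/\varepsilon$); Lemma~\ref{lemma:positive-homogeneous-equivalence} is invoked only with $\psi_A = \psi_{1}^{N}$ and $\psi_B = \psi_{\infty}^{N}$, to carry both bounds over to $\psi_{1}^{N}$. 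You instead apply the lemma once per augmenting function with $\psi_B = \|\cdot\|_\infty$, obtaining the global bound $\psi(u) \geq \gamma \|u\|_\infty$ in one stroke; condition 3 is then immediate, and condition 2 falls out by restricting to $\|u\|_\infty \geq r$. Your decomposition is shorter, treats the two functions symmetrically, and exposes the fact that for positively homogeneous functions a global form of condition 3 subsumes condition 2; it also makes plain that the only ingredients are strict positivity away from the origin, finiteness and homogeneity, which is precisely the observation the paper later exploits in Remark~\ref{Remark8}. What the paper's route buys in exchange is a constructive $\delta$ tied to the geometry of the basis $N$, with the compactness step visible rather than hidden inside the lemma. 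Two small points: your claim that the same $\gamma$ serves both functions is not automatic from the lemma (each application yields its own constant) --- either take the smaller of the two, or note that $\psi_{1}^{N} \geq \psi_{\infty}^{N}$ pointwise since every summand in \eqref{neqn:3} is nonnegative; in any case Theorem~\ref{Thm1} only requires constants per augmenting function, so nothing is at stake.
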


\begin{proof} 
	%It is clear that if we can show both augmenting functions of the  (\ref{neqn:2})  and (\ref{neqn:3}) satisfy the conditions given in Theorem \ref{Thm1} then weighted sums of each type will also satisfy these assumptions. \\
	
	Let $V= B^{\infty}_{\varepsilon} (0)$ be an open ball in the infinity norm with radius $\varepsilon >0$ centred at the origin. This is an appropriate open neighbourhood of $0$ for the purposes of Conditions 2 and 3 of Theorem \ref{Thm1}.
	\\
	
	\textbf{Condition 1: } $\psi(0) = 0$. \\
	If $u=0$ then $\text{\bf{n}}_i^{\top} u=0$ and therefore $\psi_{\infty}^{N} (u) =0$ and $\psi_1^{N} (u) = 0$, as required.\\
	
	\textbf{Condition 2: } $\psi(u) \geq \delta > 0, \forall u \not\in V$ for some positive scalar $\delta$. \\
	Using Theorem \ref{theorem:positive-dot-product}, for any $u \ne 0$ we have some $i$ such that $ \text{\bf{n}}_i^{\top} u >0$ and hence $\psi_{\infty}^{N} (u) >0$. Now define 
	\begin{eqnarray}\label{eq:pospen-con2-1}
		\delta &:=& \min_u \{ \max_{i=1,\dots,l} \{ \text{\bf{n}}_i^{\top} u \} \mid \|u \|_\infty = \varepsilon \}   > 0,
	\end{eqnarray}
	where $\delta > 0$ follows from the compactness of the $\varepsilon$- ball, the continuity of $u \mapsto \max_{i=1,\dots,l} \{ \text{\bf{n}}_i^{\top} u \}$, and Theorem~\ref{theorem:positive-dot-product}. For any $u \notin V$, the point $v := \varepsilon \frac{u}{\|u\|_{\infty}}$ is in $V$ and hence $\psi_{\infty}^N (v) \geq \delta >0$. Using the 
	positive homogeneity property we have 
	\begin{eqnarray*}
		&\frac{\varepsilon}{\|u\|_\infty}\psi_{\infty}^N (u) \geq \delta >0 \\
		\text{and so } &\quad \psi_{\infty}^N (u) \geq \delta \frac{\|u\|_{\infty}}{\varepsilon} \geq \delta >0,
	\end{eqnarray*}
	using the fact that $u \notin V$ means $\|u\|_{\infty} \geq \varepsilon$. This is the required inequality for $ \psi_{\infty}^N$.
	
	Apply Lemma \ref{lemma:positive-homogeneous-equivalence} to deduce that there exists a $\eta > 0$ such that:
	$$\psi_{1}^N (u) \geq \eta \psi_{\infty}^N (u) \geq \eta \delta >0 \text{ for all } u \notin V. $$
	$\eta \delta$ is also a positive scalar and so this is the required inequality for $\psi_{1}^N$.
	\\
	
	\textbf{Condition 3: } $\psi(u) \geq \gamma ||u||_\infty, \forall u \in V$ for some positive scalar $\gamma$.
	\\
	The property holds trivially for $u=0$. For any $u \in V \setminus \{0\}$,  the point $v := \varepsilon \frac{u}{\|u\|_{\infty}}$ is in $V$ and using the same $\delta$ as defined in \eqref{eq:pospen-con2-1} we have
	\begin{eqnarray*}
		&\frac{\varepsilon}{\|u\|_{\infty}}\psi_{\infty}^N (u) \geq \delta  >0 \\
		\text{and so } &\quad \psi_{\infty}^N (u) \geq \delta \frac{\|u\|_{\infty}}{\varepsilon} \geq \frac{\delta}{\varepsilon} \|u\|_{\infty}  >0,
	\end{eqnarray*}
	and so we may place $\gamma := \frac{\delta}{\varepsilon}>0$. This is the required inequality for  $\psi_{\infty}^N$.
	 
	As above, apply Lemma \ref{lemma:positive-homogeneous-equivalence} to deduce that there exists a $\eta > 0$ such that:
	$$\psi_{1}^N (u) \geq \eta \psi_{\infty}^N (u) \geq  \eta \gamma \| u \|_{\infty} >0. $$
	$\eta \gamma$ is also a positive scalar and so this is the required inequality for $\psi_{1}^N$.
\end{proof}

\begin{corollary}\label{pospen-corollary}
	Assume that $\zeta^{SIP}$ is feasible, its optimal value is finite, and the data which defines it is rational. Then the optimal value of the augmented Lagrangian dual problem $\zeta^{LD+}_\rho$ using an augmenting function of the form of (\ref{neqn:2}) or (\ref{neqn:3}) is equal to the optimal value of $\zeta^{SIP}$ for some finite $\rho$; that is,
	\begin{equation}\label{EqCloseDualGap}
	\zeta^{LD+}_\rho = \zeta^{LR+}_{\rho} (\omega_{LP}) = \zeta^{SIP}
	\end{equation}
	where $\omega_{LP}$ is the optimal multiplier of the linear programming relaxation of (\ref{NAC}). 
	In particular, this applies to our proposed augmenting function (\ref{neqn:1}).
\end{corollary}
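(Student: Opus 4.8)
The plan is to obtain this as an immediate specialisation of Theorem \ref{Thm1} combined with Proposition \ref{pospen}: the corollary is really just the abstract finite-penalty result instantiated on the concrete positive-basis family. First I would observe that the standing hypotheses of the corollary — feasibility of $\zeta^{SIP}$, finiteness of its optimal value, and rationality of the defining data — coincide exactly with the hypotheses required by Theorem \ref{Thm1}, so no additional verification is needed on that side.

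Next I would invoke Proposition \ref{pospen}, which already establishes that for any positive basis $N$ both $\psi_\infty^N$ and $\psi_1^N$ satisfy Conditions 1--3 of Theorem \ref{Thm1}. With the matching hypotheses and the three structural conditions in hand, Theorem \ref{Thm1} applies verbatim and produces a finite $\rho$ for which $\zeta^{LD+}_\rho = \zeta^{LR+}_\rho(\omega_{LP}) = \zeta^{SIP}$, where $\omega_{LP}$ is the optimal multiplier of the LP relaxation of the NACs \eqref{NAC}. This settles the general claim for augmenting functions of the form \eqref{neqn:2} or \eqref{neqn:3}.

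For the closing assertion concerning the proposed penalty \eqref{neqn:1}, I would simply point to the identity \eqref{equation:phi-mu-represented-as-phi-1}, which already exhibits $\psi_\rho$ as $\psi_1^{N_\rho}$ for the explicit positive basis $N_\rho$. Hence \eqref{neqn:1} is a genuine special case of \eqref{neqn:3} and inherits the conclusion with no further work.

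The one point I expect to require care — and which I would make explicit rather than gloss over — is compatibility with the summed augmenting function form $\psi(u)=\sum_{s\in S}\psi_\rho^s(u_s)$ demanded by Theorem \ref{Thm1}, since the augmented Lagrangian $\zeta^{LR+}_\rho$ in \eqref{ALDualFunctionStart}--\eqref{ALDualFunctionEnd} is assembled from per-scenario penalties. For $\psi_1^{N_\rho}$ this separability is precisely what \eqref{equation:phi-mu-represented-as-phi-1} supplies, because every vector of $N_\rho$ is a scaled coordinate vector supported in a single scenario block; the application is therefore clean. For the max-based function $\psi_\infty^N$, or for a positive basis whose vectors couple several scenario blocks, the function does \emph{not} split as a scenario-wise sum, so to remain rigorous I would either restrict the statement to block-separable bases (for which the summed form holds automatically) or check that the argument underlying Theorem \ref{Thm1} tolerates a non-separable augmenting term. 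I regard reconciling $\psi_\infty^N$ with the summed-form requirement as the main obstacle to reading the corollary in full generality.
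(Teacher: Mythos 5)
Your proof is correct and follows essentially the same route as the paper's: the paper likewise derives \eqref{EqCloseDualGap} directly from Theorem \ref{Thm1} together with Proposition \ref{pospen}, and disposes of the final claim about \eqref{neqn:1} by citing the representation \eqref{equation:phi-mu-represented-as-phi-1}.

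On the obstacle you flag: you are right that there is a mismatch between the summed form $\psi(u)=\sum_{s\in S}\psi_{\rho}^{s}(u_s)$ written into the paper's statement of Theorem \ref{Thm1} and the generality claimed by the corollary, since $\psi_{\infty}^{N}$, or $\psi_{1}^{N}$ for a positive basis whose vectors couple several scenario blocks, does not decompose scenario-wise; the paper's own two-line proof passes over this silently. However, this is a defect of the paper's restatement of the theorem rather than of the result itself: the source theorem in \cite{Feizollahi2016} is established for a general augmenting function satisfying Conditions 1--3, with no separability requirement, so citing that theorem directly (instead of the scenario-adapted summed form) removes the difficulty and the corollary holds in the stated generality. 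Your restriction to block-separable bases is therefore unnecessary, though your instinct to verify the hypothesis rather than assume it is sounder than what the paper does.
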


\begin{proof}The equalities~\eqref{EqCloseDualGap} follow directly from Theorem \ref{Thm1} and Proposition \ref{pospen}. The last claim follows from the observation that (\ref{neqn:1}) may be represented as a function of the form of (\ref{neqn:3}), as demonstrated in (\ref{equation:phi-mu-represented-as-phi-1}).
\end{proof}

\begin{remark} \label{Remark8}
	Consider a positive basis $N=\left\{\bfemph{n}_{1},\dots , \bfemph{n}_{l}\right\}$. Each of the functions $g_i(u) = \max \{ \bfemph{n}_i^\top u, 0 \}$ is non-negative, positive homogeneous and finite valued, and these properties are preserved if multiple $g_i$s are summed, or their maximum is taken. By Theorem \ref{theorem:positive-dot-product}, for any non-zero $u$ there exists an index $i \in \{1,\dots,l\}$ such that $g_i(u)$ is strictly positive. Therefore, if every one of the $g_i$ functions is combined using a combination of summation and/or maximisation, the resulting function $g(u)$ will be strictly positive for all non-zero $u$. Applying Lemma \ref{lemma:positive-homogeneous-equivalence} to bound $g$ below by a positive multiple of $\psi_{\infty}^N$ (as $\psi_{1}^N$ was treated in Proposition \ref{pospen}) shows that this function $g(u)$ satisfies the conditions of Theorem \ref{Thm1}, and as such will close the duality gap if used as an augmenting function.	
\end{remark}

Remark \ref{Remark8} implies that we can not only construct $\psi_{\infty}^N$ and $\psi_{1}^N$ but also a wide variety of other augmenting functions from any given positive basis, depending on the order in which the maximisation and summation operations are applied to the $g_i$ functions.\\
	Furthermore, the sum or maximum of any two augmenting functions which satisfy the conditions of Theorem~\ref{Thm1} will itself satisfy the conditions of Theorem \ref{Thm1}, which yields further flexibility.

\begin{remark} \label{RemarkSurrogate}
	By using the positive basis $\{+e_i\}^{m}_{i=1} \cup \{ \sum_{i=1}^{m} - e_i\}$ or similar to define an augmenting function, we can obtain penalty terms analogous to the Lagrangian terms obtained through surrogate semi-Lagrangian relaxation.
\end{remark}

\section{Developing a penalty-based block Gauss-Seidel method} \label{sec3}

To exploit the potential for decomposability that this formulation presents, we consider a block Gauss-Seidel (GS) method approach.
In Section~\ref{SectGS}, we present a classical framework for GS methods as applied to nonlinear optimisation problems, and in Section~\ref{SectGSApp}, 
we show how it can be adapted to obtain solutions for SMIP.

\subsection{A block Gauss-Seidel method}\label{SectGS}

%In this section, we first briefly present a classical framework for GS methods to then show how it can be adapted to obtain solutions for SMIP.
%To this end, 
We consider the general problem given by
\begin{flalign}
\min_{x,z} \ &f(x,z) \label{GenProb}\\
\text{s.t.: } &x \in X, z \in Z. \nonumber  
\end{flalign}
We assume that $f$ is convex, but not necessarily differentiable. The sets $X$ and $Z$ are closed, but not necessarily convex.
The assumption that $x \in X$ and $z \in Z$ are taken from disjoint sets is adequate for our purposes, although block GS approaches have been studied in a more general setting where $(x,z)$ is taken from a set $K \subset \mathbb{R}^{n_x+n_z}$. (See~\cite{WendellHurter1976} for a treatment of the case where the constraint set is disjoint, and~\cite{Gorski2007} for the case where the constraint set may not be disjoint and developments are based on biconvexity assumptions.)
GS methods solve problem \eqref{GenProb} by separating it into two simpler problems. Given an iterate $(x^k, z^k)$, problem \eqref{GenProb} is solved with respect to $x$ for fixed $z = z^k$, yielding a new $x$-iterate $x^{k+1}$. Then, problem \eqref{GenProb} is solved with respect to $z$ for fixed $x = x^{k+1}$, yielding a new $z$-iterate $z^{k+1}$. 
In Algorithm~\ref{Alg1}, a formal listing of a block GS method applied to problem~\eqref{GenProb} is given.
\begin{algorithm}[H]
\caption{A block GS method}\label{Alg1}
\begin{algorithmic}[1] %line numbering frequency. 
\State \textbf{initialise} $(x^0, z^0) \in X \times Z$
\For {$k = 1,\dots, k_{\text{max}}$}
    \State $x^{k} \gets \argmin_{x} \ \left\{f(x,z^{k-1}) :  x \in X \right\}$
    \State $z^{k} \gets \argmin_{z} \ \left\{f(x^k,z) :  z \in Z \right\}$
    \State $k \gets k + 1$
\EndFor
\State {\bf return} $(x^{k_{\text{max}}}, z^{k_{\text{max}}})$
\end{algorithmic}
\end{algorithm}

The sequence $\braces{(x^k,z^k)}$ generated by iterations of Algorithm~\ref{Alg1} has limit points when $X$ and $Z$ are compact. When $f$ is furthermore continuous and bounded from below over $X \times Z$, the limit points $(x^*,z^*) \in X \times Z$ are easily shown to be {\it partial minima}~\cite{WendellHurter1976}; that is, it holds that
\begin{flalign}
    &f(x^*,z^*) \leq f(x, z^*), \ \forall x \in X, \label{PartialOptX}\\
    &f(x^*,z^*) \leq f(x^*, z), \ \forall z \in Z. \label{PartialOptZ} 
\end{flalign}
This claim is formally stated in Proposition~\ref{PropGSPartMin}, whose proof is implicit from the developments of~\cite{WendellHurter1976} and is given here for the sake of completeness.
\begin{proposition}\label{PropGSPartMin}
For problem~\eqref{GenProb}, let $f$ be continuous and bounded from below, and let $X$ and $Z$ be compact. Then the  limit points $(x^*,z^*)$ of the sequence
$\braces{(x^k,z^k)}$ generated by iterations of Algorithm~\ref{Alg1} are partial minima.
\end{proposition}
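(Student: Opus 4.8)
The plan is to exploit the monotone decrease of the objective along the iterates together with compactness to pin down the behaviour of a convergent subsequence. First I would record the two inequalities that define the iterates of Algorithm~\ref{Alg1}: by its line~3, $f(x^k,z^{k-1}) \le f(x,z^{k-1})$ for all $x\in X$, and by its line~4, $f(x^k,z^k) \le f(x^k,z)$ for all $z\in Z$. Chaining these (taking $x=x^{k-1}$ and $z=z^{k-1}$ respectively) gives $f(x^k,z^k) \le f(x^k,z^{k-1}) \le f(x^{k-1},z^{k-1})$, so the scalar sequence $\braces{f(x^k,z^k)}$ is non-increasing; since $f$ is bounded below it converges to some value $f^*$. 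A squeeze argument then forces the interleaved value $f(x^{k+1},z^k)$, which lies between $f(x^{k+1},z^{k+1})$ and $f(x^k,z^k)$, to converge to the same limit $f^*$.

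Next I would use compactness of $X\times Z$ to extract a subsequence $\braces{(x^{k_j},z^{k_j})}$ converging to the limit point $(x^*,z^*)$; by continuity $f(x^*,z^*)=f^*$. Condition~\eqref{PartialOptZ} is then immediate: passing to the limit in $f(x^{k_j},z^{k_j}) \le f(x^{k_j},z)$, which holds for every $z\in Z$, yields $f(x^*,z^*)\le f(x^*,z)$ by continuity.

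The delicate point is~\eqref{PartialOptX}, because $x^{k_j}$ minimises $f(\cdot,z^{k_j-1})$ over the slice $z^{k_j-1}$, and there is no reason for $z^{k_j-1}$ to converge to $z^*$. My plan is to sidestep this by looking one step ahead: the iterate $x^{k_j+1}$ minimises $f(\cdot,z^{k_j})$, that is, $f(x^{k_j+1},z^{k_j}) \le f(x,z^{k_j})$ for all $x\in X$, and here the slice $z^{k_j}\to z^*$ is exactly the one I need. Using compactness of $X$ once more, I would pass to a further subsequence along which $x^{k_j+1}\to \tilde x \in X$, and take limits to obtain $f(\tilde x,z^*) \le f(x,z^*)$ for all $x\in X$. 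It remains to identify $f(\tilde x,z^*)$ with $f(x^*,z^*)$: by the squeeze step above $f(x^{k_j+1},z^{k_j})\to f^*$, while continuity gives $f(x^{k_j+1},z^{k_j})\to f(\tilde x,z^*)$, so $f(\tilde x,z^*)=f^*=f(x^*,z^*)$. Combining these, $f(x^*,z^*) \le f(x,z^*)$ for all $x\in X$, which is~\eqref{PartialOptX}.

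I expect the main obstacle to be precisely the slice mismatch just described; the device of passing to the successor iterate $x^{k_j+1}$, whose defining minimisation is taken over $z^{k_j}\to z^*$ rather than over the possibly non-convergent $z^{k_j-1}$, is what resolves it, and it is also why only continuity, boundedness below, and compactness (not convexity) are actually required for the argument. The one piece of bookkeeping to handle carefully is the nesting of subsequences: the further extraction used to make $x^{k_j+1}$ converge must be taken as a subsequence of the one already fixed for $(x^{k_j},z^{k_j})$, so that the established convergences to $(x^*,z^*)$ and to $f^*$ are preserved throughout.
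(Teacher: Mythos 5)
Your proof is correct. It also rests on the same crucial device as the paper's proof: since $x^{k_j}$ minimises $f(\cdot,z^{k_j-1})$ over the ``wrong'' slice, one compares instead against the successor iterate $x^{k_j+1}$, which minimises $f(\cdot,z^{k_j})$ over the slice that does converge to $z^*$. Where the two arguments part ways is in their logical packaging. The paper argues by contradiction: if some $\bar{x} \in X$ had $f(\bar{x},z^*) < f(x^*,z^*)$, then along the convergent subsequence one gets, for some $\gamma>0$, the chain $f(x^k,z^k) > f(\bar{x},z^k)+\gamma \ge f(x^{k+1},z^k)+\gamma \ge f(x^{k+1},z^{k+1})+\gamma$, so the non-increasing value sequence drops by at least $\gamma$ infinitely often and diverges to $-\infty$, contradicting boundedness below; notably, this never requires $x^{k+1}$ to converge, so no further subsequence extraction is needed. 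You instead argue directly: you first establish monotone convergence $f(x^k,z^k)\to f^*$ and the squeeze $f(x^{k+1},z^k)\to f^*$, then extract a convergent sub-subsequence $x^{k_j+1}\to\tilde{x}$ and identify $f(\tilde{x},z^*)=f^*=f(x^*,z^*)$ before passing to the limit in the minimisation inequality. Your route costs a little more bookkeeping (the nested subsequences you flag at the end, which you handle correctly), but it buys extra information as by-products: the objective values converge monotonically to a single value $f^*$ shared by \emph{all} limit points, and the auxiliary limit $\tilde{x}$ is itself a minimiser of $f(\cdot,z^*)$. The paper's contradiction argument is shorter and avoids the auxiliary limit entirely, but certifies only the partial-minimum property itself. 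Both use boundedness below in an essential way — yours to make the value sequence converge, the paper's to derive the contradiction.
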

\begin{proof}
We have by construction that $f(x^k,z^k) \le  f(x^k,z)$ for all $z \in Z$, and by the continuity of $f$, we have the second requirement $f(x^*,z^*) \leq f(x^*, z), \ \forall z \in Z$ for partial optimality.
To establish the first requirement~\eqref{PartialOptX}, assume for sake of contradiction that there is an $\bar{x} \in X$ for which $f(x^*,z^*) > f(\bar{x}, z^*)$. Due to the continuity of $f$, we have, for some infinite subsequence index set $\mathcal{K}$ such that $\lim_{k \to \infty, k \in \mathcal{K}} (x^k,z^k) = (x^*,z^*)$, the existence of $\gamma > 0$ such that $f(x^k,z^k) - f(\bar{x},z^k) > \gamma > 0$. Thus, $f(x^k,z^k) > f(\bar{x},z^k) + \gamma \ge f(x^{k+1},z^k) + \gamma \geq f(x^{k+1},z^{k+1}) + \gamma$, which would imply that $\lim_{k \to \infty} f(x^k,z^k) = -\infty$ since $\mathcal{K}$ is an infinite index set, so that $f$ is unbounded from below, a contradiction.
Therefore, $(x^*,z^*)$ must be a partial minimum for problem~\eqref{GenProb}.
\end{proof}
\begin{remark}
For practical purposes, we might approximate the satisfaction of~\eqref{PartialOptX} and~\eqref{PartialOptZ} through the $\epsilon \ge 0$ parameterised termination criterion
\begin{equation}\label{EqTermCond}
f(x^k,z^k) - f(x^{k+1}, z^{k+1}) \le \epsilon.
\end{equation}
\end{remark}

In the setting where $f$ is convex and differentiable, $X$ and $Z$ are nonempty, closed and convex, and $(x,z) \mapsto f(x,z)$ is inf-compact, 
it is well-known (see, for example,~\cite{Bertsekas1999,GS2000,Tseng2001}) that the limit points $(x^*,z^*)$ are optimal for problem~\eqref{GenProb}.
However, in the more general setting where $f$ is non-differentiable and/or $X$ and $Z$ are non-convex, it is well-known that a partial minimum need not be a global, or even a local, minimum. 
In what follows, we provide a few small examples to illustrate this suboptimal stabilisation, and to motivate heuristic features of our developed algorithm that can mitigate this unfortunate tendency.

\noindent
\medskip
{\bf Examples:}
\begin{enumerate}
\item Let problem~\eqref{GenProb} be specified so that $f(x,z) : \mathbb{R} \times \mathbb{R} \mapsto \mathbb{R}$ is defined to be $f(x,z) = 7x^2 + 10xz + 7z^2$, and let $X = Z=\braces{-2,-1,0,1,2}$. For $(x^0,z^0)=(2,-2)$, the application of Algorithm~\ref{Alg1} leads immediately to the one limit point $(\bar{x},\bar{z})=(1,-1)$. We have $f(1,-1)=4$, but $f(0,0)=0$, so $(\bar{x},\bar{z})=(1,-1)$ is not optimal. Note here that $f$ is convex and continuously differentiable, but the constraint set $X \times Z$ is nonconvex due to the integer restriction, and this is the reason that the limit point was not guaranteed to be optimal.
\item Let problem~\eqref{GenProb} be specified so that $f(x,z) : \mathbb{R} \times \mathbb{R} \mapsto \mathbb{R}$ is defined to be $f(x,z) = -2x - z + \rho \left| x-z \right|$, $X=[-2,3]$, and $Z=[0,5]$.
For $\rho \in [0,1)$, the optimal solution is $(x^*,z^*)=(3,5)$. For $\rho = 1$, the optimal solutions are taken from $(x^*,z^*) \in \braces{3} \times [3,5]$, and for $\rho > 1$, the optimal solution is $(x^*,z^*) = (3,3)$.
\begin{enumerate}
\item When applying the GS approach of Algorithm~\ref{Alg1} with $\rho \in (0,1)$, the resulting sequence stabilises after one iteration at the optimum $(x^*,z^*) = (3,5)$ for any feasible starting point. 
\item For $\rho=1$ with $z^0 \ge 3$, we have after half an iteration $(x^1,z^0)=(3,z^0)$ which is an optimum solution, and the remaining updates stay at some optimal solution $(x^*,z^*) \in \braces{3} \times [3,5]$. For $\rho=1$ with starting point $z^0 < 3$, we have $x^1=3$ and $z^1 \in [3,5]$ and so stabilisation at an optimal solution also occurs. 
\item For $\rho > 1$ with $z^0 \ge 3$, we have $(x^1,z^1)=(3,3)$, which is optimal. However, for $\rho > 1$ with $z^0 < 3$, we have $x^1 = z^0$ and $z^1=z^0$, so that stabilisation occurs at $(\bar{x},\bar{z})=(z^0,z^0)$, which is not optimal. 
\end{enumerate}
\item Let problem~\eqref{GenProb} be specified so that $f(x,z) : \mathbb{R}^3 \times \mathbb{R}^3 \times \mathbb{R}^3 \mapsto \mathbb{R}$ is defined to be
$$
f(x,z) = 2x_{1,1}-1x_{1,2}-2x_{1,3} -2x_{2,1} -1 x_{2,2} + 2x_{2,3} + \rho \sum_{i=1,2} \sum_{j=1,2,3} \left| x_{i,j}-z_j \right|,
$$
and let $X$ and $Z$ be defined so that
$$X = \braces{(x)_{i,j} : \sum_{j=1}^3 x_{i,j} \le 1 \;\text{for}\; i=1,2;\; x_{i,j} \in \braces{0,1}\;\text{for}\; i=1,2,\; j=1,2,3},$$ and $Z = \braces{0,1}^3$.
For $\rho \to \infty$ (simulating the enforcement of constraints $x_{i,j}=z_j$ for $i=1,2$ and $j=1,2,3$) we have the optimal solution 
\[
(x^*,z^*)=\left([(0,1,0),(0,1,0)],0,1,0\right).
\]
If such constraints are altogether ignored ($\rho=0$), then the optimal $x$-component  is $x^*=\left( (0,0,1),(1,0,0) \right)$. 
This behaviour would only change at the threshold $\rho=1$. For $\rho > 1$, the optimal solution would be  
$(x^*,z^*) =   ([(0,1,0),(0,1,0)],(0,1,0))$. 
\begin{enumerate}
\item Now we consider what happens when the GS approach of Algorithm~\ref{Alg1} is applied. Let $z^0=(0,0,0)$. Starting with a small penalty such as $\rho=0.5$, we have
\begin{align*}
x^1=\left( (0,0,1),(1,0,0) \right) \quad \text{and} \quad z^1 \in \braces{(0,0,0), (0,0,1), (1,0,0), (1,0,1)},
\end{align*}
where there is more than one way to choose $z^1$. If, for example, we make it a policy to choose $z$ by some bitwise lexicographical rule, then we choose $z^1=(0,0,0)$.
Keeping this same penalty $\rho=0.5$, we find that stabilisation has occurred, where $x^k=x^1$ and $z^k=z^1$ for $k \ge 1$. 
If we increase the penalty value to $\rho=2$ for iteration $k=2$, then 
we have the stabilisation $x^1=\left( (0,0,0),(0,0,0) \right)$ and $z^1=(0,0,0)$, which is suboptimal (and $\rho=2$ is the threshold for this change in stabilisation to occur).

If, instead, the $z$ update is chosen by a reverse-lexicographic rule, so that $z^1=(1,0,1)$, then we have immediate stabilisation with
$$(x^k,z^k)=\left( (0,0,1),(1,0,0),(1,0,1)\right)$$ for all $k \ge 1$ for all $\rho > 0$. (Notice that no matter how large the penalty is, consensus is not achieved in the GS setting. That is, without additional restriction on how $z$ is updated, the optimal $z$ update may be chosen to always correspond to a consensus solution that is infeasible for both scenarios. In practice, we would need a rule to insure that the $z$ update is chosen to satisfy $\sum_{j=1}^3 z_j \le 1$ to match with the constraints in the $x$ update subproblems.)
\item The shortcomings of the above GS approach motivate the introduction of more precision in how the consensus discrepancies are penalised, where $f$ is redefined to be
\begin{equation*}
f(x,z) = 2x_{1,1}-1x_{1,2}-2x_{1,3} -2x_{2,1} -1 x_{2,2} + 2x_{2,3} + \sum_{i=1,2} \sum_{j=1,2,3} \rho_{i,j}\left| x_{i,j}-z_j \right|.
\end{equation*}
That is, instead of one scalar $\rho$, we have term-specific $\rho_{i,j}>0$ for each $i=1,2$ and $j=1,2,3$. We start as before with $z^0=(0,0,0)$, and let $\rho_{i,j}=0.5$ for each $i=1,2$ and $j=1,2,3$. Assuming lexicographic rule in choosing $z$, we have as before
$$
x^1=\left( (0,0,1),(1,0,0) \right) \quad \text{and} \quad z^1 =(0,0,0),
$$
and this is stable if the penalty does not change.
Now increase $\rho_{1,3}=\rho_{2,1}=1$, and we have 
$$
x^2=\left( (0,0,1),(1,0,0) \right) \quad \text{and} \quad z^2 =(1,0,1),
$$
and this is stable if the penalty does not change. Increasing  $\rho_{1,1}=\rho_{2,3}=1$, we have again
$$
x^3=\left( (0,0,1),(1,0,0) \right) \quad \text{and} \quad z^3 =(0,0,0),
$$
and this is stable. But once we again increase $\rho_{1,3}=\rho_{2,1}=2$, we have
$$
x^4=\left( (0,1,0),(0,1,0) \right) \quad \text{and} \quad z^4 =(0,1,0),
$$
which is optimal for the original problem.
\end{enumerate}
\end{enumerate}

The last example suggests that there may be no fixed ideal penalty in a GS setting that will lead to both a closing of the duality gap and avoiding the nonoptimal stationarity due to GS iterations. The penalty must vary in a manner that takes the component-wise consensus status into consideration. Any sensible heuristic for varying the penalties would have all penalties start small (but nonzero), and increase carefully, in a "fine-tuned" manner so as to ``suggest'' a temporary fixing of certain components of $x$ to the current fixed values of the corresponding components of $z$. The strength of suggestion for each component is always relative to the other components as the magnitude of each component-wise penalty is relative to the magnitude of the other component-wise penalties. 

An approach based on such an idea where some subset of variables is subject to ``suggested'' fixing with strength of suggestion determined by the penalties would be of a ``soft'' combinatorial nature. 
This is in contrast with a ``hard'' combinatorial approach that might be based on the idea of choosing some subset of integer variables  at each iteration and simply fixing each one to some constant feasible value while conducting a minimisation over the unfixed variables. The algorithm to be presented later is of a soft combinatorial nature.% , much as one must advance pawns carefully in a game of chess, advancing some pawns but not others, and only for good reason.
%\remark{If the problem of example 2 is motivated by a penalty function for the constraint $x=z$, then a connection is suggested between the threshold $\rho$ at which a block GS approach can stabilize suboptimally, and the threshold $\rho$ at which an exact penalty function effect is observed. Interestingly, both thresholds occur at $\rho=1$.}

\subsection{Adapting block Gauss-Seidel method to solve SMIPs using Penalty functions}\label{SectGSApp}

In this section we present how block GS method can be used to obtain solutions for SMIP problems. The approach will rely on the delayed calculation of variable $z$, which will in turn allow us to obtain a decomposed version of the problem. To do such, let us first explicitly state $\zeta^{LR+}_\rho(\omega)$ as
\begin{flalign}
\zeta^{LR+}_\rho(\omega) :~ \min_{x,y,z} &\sum_{s \in S}p_s(\left[c +\omega \right]^\top x_s + q_s^\top y_s) \nonumber\\
&+ \sum_{s \in S}\underline{\rho}_s^\top[x_s - z]^- +  \sum_{s \in S}\overline{\rho}_s^\top [z - x_s]^- \label{phi-mu} \\ 
\text{s.t.: } &x_s \in X, \ \forall s \in S  \\
&y_s \in Y_s(x_s), \ \forall s \in S. \label{EqMIPPenalty}
\end{flalign} 
%
%where $(\underline{\rho}_s, \overline{\rho}_s)_{s \in S} =(p_s \underline{\mu}_s, p_s \overline{\mu}_s)_{s \in S}$. 
The following proposition will become useful in the following derivations.
\begin{proposition} \label{Prop1}
%Suppose $\psi(~\cdot~) $, is given by (\ref{phi-mu}). 
For problem~\eqref{phi-mu}-\eqref{EqMIPPenalty} with any $\omega=\tilde{\omega} \in \mathbb{R}^{n_x \times |S|}$, there exists a finite $\rho^*(\tilde{\omega})$ such that $\zeta^{LR+}_{\rho^*}(\tilde{\omega}) = \zeta^{SIP}$.
\end{proposition}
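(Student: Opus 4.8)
The plan is to recognise the objective penalty in~\eqref{phi-mu} as an augmenting function already shown to satisfy the hypotheses of Theorem~\ref{Thm1}, and then to upgrade the conclusion of that theorem from the single multiplier $\omega_{LP}$ to the arbitrary $\tilde\omega$ in the statement. First I would set $u_s := x_s - z$ and note that, since $[z - x_s]^- = [-u_s]^-$, the two penalty sums in~\eqref{phi-mu} are precisely $\psi_\rho(u)$ from~\eqref{neqn:1}. By the representation~\eqref{equation:phi-mu-represented-as-phi-1} this equals $\psi_1^{N_\rho}$ for the positive basis $N_\rho$, so Proposition~\ref{pospen} guarantees that it meets Conditions 1--3 of Theorem~\ref{Thm1}. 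At fixed $\tilde\omega$ the only remaining role of $\omega$ is to perturb the linear cost $c$ to $c + \tilde\omega_s$ scenario by scenario.

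With that reduction in hand, I would invoke the observation of Remark~\ref{BolandEberhard}: the finite-penalty argument of~\cite[Thm.~5]{Feizollahi2016} never uses the specific value $\omega_{LP}$. Concretely, I would re-run that argument on the instance whose cost vector is $c + \tilde\omega_s$. This perturbation leaves the feasible set $\{x_s \in X,\ y_s \in Y_s(x_s)\}$ untouched, hence still a finite union of polyhedra, and keeps the problem feasible with bounded optimum; on the consensus set $\{x_s = z\ \forall s\}$ the extra cost collapses to $\big(\sum_{s} p_s \tilde\omega_s\big)^\top z$, which vanishes whenever $\tilde\omega \in \Omega$, so the constrained optimum of the perturbed objective is again $\zeta^{SIP}$. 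The theorem then supplies a finite threshold $\rho^*(\tilde\omega)$ --- possibly larger than the one associated with $\omega_{LP}$ --- for which the penalised value coincides with the constrained value, giving $\zeta^{LR+}_{\rho^*}(\tilde\omega) = \zeta^{SIP}$.

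I expect the main obstacle to be turning the appeal to Remark~\ref{BolandEberhard} into a self-contained argument. The cleanest way is to isolate the \emph{exact-penalty} mechanism behind Theorem~\ref{Thm1}: by Conditions 2 and 3 the function $\psi_\rho$ is bounded below by a strictly positive constant off a neighbourhood of consensus and by a positive multiple of $\|x - z\|_\infty$ on it, and being positive homogeneous it is an exact penalty for the consensus constraint over a feasible region that is a finite union of polyhedra; perturbing a \emph{linear} objective by $\tilde\omega$ preserves feasibility, boundedness and this polyhedral structure, so the exact-penalty threshold stays finite. Two points deserve care along the way: the limit reaches $\zeta^{SIP}$ exactly (rather than a shifted value) only because $\tilde\omega \in \Omega$ kills the multiplier term at consensus, so the natural reading of the hypothesis is $\tilde\omega \in \Omega$; and to admit irrational $\tilde\omega$ one should replace the rational-data hypothesis of~\cite[Thm.~5]{Feizollahi2016} with the rationality-free conditions of Remark~\ref{BolandEberhardRem2}.
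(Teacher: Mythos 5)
Your proposal follows essentially the same route as the paper's own proof: it identifies the penalty terms in \eqref{phi-mu} with $\psi_\rho$ from \eqref{neqn:1}, uses Proposition~\ref{pospen} (equivalently Corollary~\ref{pospen-corollary}) to verify the hypotheses of Theorem~\ref{Thm1}, and then appeals to Remark~\ref{BolandEberhard} to replace $\omega_{LP}$ by the arbitrary $\tilde\omega$. Your two added cautions --- that the value equals $\zeta^{SIP}$ rather than a shifted value only when $\tilde\omega \in \Omega$ (or when the multiplier term is kept in the form $\omega_s^\top(x_s-z)$, which vanishes at consensus), and that an irrational $\tilde\omega$ strains the rational-data hypothesis --- are legitimate refinements that the paper's three-line proof glosses over, but they do not alter the argument.
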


\begin{proof}
	%We note that by (\ref{neqn:12}) we can express the penalty term in (\ref{phi-mu}) in terms of a sum of penalty terms of the form (\ref{neqn:2})  and (\ref{neqn:3}) and hence 
The penalty terms in~\eqref{phi-mu} result from the evaluation of $\psi_\rho\left( (x_s - z)_{s \in S} \right)$ with $\psi_\rho$ as defined in~\eqref{neqn:1}.
		Thus, by Corollary \ref{pospen-corollary}, the requirements of Theorem \ref{Thm1} are satisfied. % by the augmenting function (\ref{phi-mu}). 
		Now one can rely on Remark \ref{BolandEberhard} to make a free choice of $\omega$. 
\end{proof} \\

Proposition \ref{Prop1} enables us to make the choice of $\omega = 0$, which leads to 
\begin{flalign}
\zeta^{LR+}_\rho(0) :~ \min_{x,y,z} &\sum_{s \in S}p_s(c^\top x_s + q_s^\top y_s) + \sum_{s \in S}\underline{\rho}_s^\top[x_s - z]^- +  \sum_{s \in S}\overline{\rho}_s^\top[z - x_s]^- \\ \label{phi-mu2}
\text{s.t.: } &x_s \in X, \ \forall s \in S  \\
&y_s \in Y_s(x_s), \ \forall s \in S.
\end{flalign} 

The block GS method for solving $\zeta^{LR+}_\rho(0)$ proceeds as follows. Let 
\begin{eqnarray*}
\phi^\rho(x,y,z,\rho) &:=&  \sum_{s \in S}p_s\phi^\rho_s(x_s,y_s,z,\mu_s), \\ 
%&= & \sum_{s \in S} 
%p_s \left\{  c^\top x_s + q_s^\top y_s + \underline{\mu}_s^\top[x_s - z]^- +  \overline{\mu}_s^\top[z - x_s]^- \right\} ,
\end{eqnarray*}
where $$\phi^\rho_s(x_s,y_s,z,\mu_s):= \left\{  c^\top x_s + q_s^\top y_s + \underline{\mu}_s^\top[x_s - z]^- +  \overline{\mu}_s^\top[z - x_s]^- \right\}$$
and
$(\underline{\mu}_s,\overline{\mu}_s):= (\frac{1}{p_s}\underline{\rho}_s,\frac{1}{p_s}\overline{\rho}_s)$ for each $s \in S$.
%  $\rho:= (\underline{\rho}_s,\overline{\rho}_s)_{s \in S} =(p_s \underline{\mu}_s, p_s \overline{\mu}_s)_{s \in S}$. 
For a given $\rho^k_s = (\underline{\rho}^k_s,\overline{\rho}^k_s)_{s \in S}$ and an initial $z^{0,0}$, the proposed method will iterate between the solution of following $l=0,1,\dots,l_{\text{max}}$ subproblems:
\begin{flalign}
(x^{k,l+1}, y^{k,l+1})_{s \in S} \gets \argmin_{x,y} \ &\phi^\rho(x,y,z^{k,l},\rho^k)\nonumber\\
\text{s.t.: } 
&x_s \in X, \ \forall s \in S  \nonumber \\
&y_s \in Y_s(x_s), \ \forall s \in S, \nonumber
\end{flalign}
and
\begin{flalign}
z^{k,l+1} \gets \argmin_{z} \ &\phi^\rho(x^{k,l+1},y^{k,l+1},z,\rho^k),
\end{flalign}
followed by $l = l+1$ and successive repetition until \emph{partial convergence} is approximately achieved in the sense of~\eqref{EqTermCond}. In this context, \textit{partial convergence} can be interpreted as having
$$\phi^\rho(x^{k,l},y^{k,l},z^{k,l},\rho^k) - \phi^\rho(x^{k,l+1},y^{k,l+1},z^{k,l+1},\rho^k) \leq \epsilon, $$
given a threshold $\epsilon \ge 0$. 
%It is not hard to show that this ends up being the same as checking
%$$||z^{k,l} - z^{k,l-1}||_2^2 \leq \epsilon.$$

At last, if the current primal infeasibility level, giving by a residual measure such as $||x^{k,l} - z^{k,l}||_2^2$, is not acceptable for a $\epsilon$ threshold, the set of penalties $\rho^k = (\underline{\rho}^k, \overline{\rho}^k)$ are then updated to $\rho^{k+1} = (\underline{\rho}^{k+1}, \overline{\rho}^{k+1})$ and the process is repeated for iteration $k+1$.

\section{Computational Implementation Aspects} \label{sec4}

Two remarkable features can be exploited in the design of an algorithm based on this idea. First, scenario-wise separability is straightforwardly achieved in the calculation of $(x^{k,l+1}, y^{k,l+1})$. This means that instead of solving one large mixed-integer linear programming (MILP) problem in this update step, we can solve several small MILP problems instead, which is typically more efficient due to the exponential nature of the branch-and-cut-based methods used to solve them.  

%This is in favour of obtaining an efficient method as it means that at each iteration MILPs are solve instead of MIQPs.

To formulate the $(x^{k,l}, y^{k,l})$-update
\begin{flalign}
(x^{k,l+1}, y^{k,l+1})_{s \in S} \in \argmin_{x,y} \ &\phi^\rho(x,y,z^{k,l},\rho^k)\nonumber\\
\text{s.t.: } 
&x_s \in X, \ \forall s \in S  \nonumber \\
&y_s \in Y_s(x_s), \ \forall s \in S, \nonumber
\end{flalign}
it is necessary to explicitly represent the function $[ \ \cdot \ ]^-$. To do so, we consider an equivalent reformulation of the problem given by 
\begin{flalign}
\phi^\rho_s(x_s^{k,l+1},y_s^{k,l+1},z^{k,l},\mu_s)=~ \min_{x,y,\underline{w},\overline{w}} \ &c^\top x_s + q_s^\top y_s + {(\underline{\mu}^k_s)}^\top \underline{w}_s + {(\overline{\mu}^k_s)}^\top \overline{w}_s \nonumber\\
\text{s.t.: } 
&\underline{w}_s \geq 0, \quad 
\underline{w}_s \geq z^{k,l} - x_s \nonumber \\
&\overline{w}_s \geq 0, \quad 
\overline{w}_s \geq x_s - z^{k,l} \nonumber \\
&x_s \in X, \quad 
y_s \in Y_s(x_s). \nonumber
\end{flalign}
Second, the calculation of 
\begin{equation}
z^{k,l+1} \in \argmin_{z} \ \phi^\rho(x^{k,l+1},y^{k,l+1},z,\rho^k) \label{neqn:124}
\end{equation}
may be performed by computing %corresponds to finding the minimum of the piecewise affine convex function
$$
z^{k,l+1} \in \argmin_{z} \ \zeta^\rho(x^{^{k,l+1}} ,z,\rho^k),
%\phi^\rho(x^{k,l+1},y^{k,l+1},z,\omega^k) = \sum_{s \in S}\left( (\underline{\rho}_s^k)^\top [x_s^{k,l+1} - z]^- + (\overline{\rho}_s^k)^\top [z - x_s^{k,l+1}]^-\right),
$$
where the penalty function $(x,z) \mapsto \zeta^\rho(x,z,\rho)$ is defined by
$$
\zeta^\rho(x ,z,\rho):=\psi_\rho( (x_s-z)_{s \in S} ) = \sum_{s \in S}\left( \underline{\rho}_s^\top [x_s - z]^- + \overline{\rho}_s^\top [z - x_s]^-\right).
$$
The last displayed problem can be solved using the following equivalent mathematical programming formulation:
\begin{flalign}
\zeta^\rho(x^{k,l+1} ,z^{k,l+1},\rho^k) =~ \min_{z,\underline{w},\overline{w}} \ &\sum_{s \in S} (\underline{\rho}^k_s)^\top \underline{w}_s + (\overline{\rho}^k_s)^\top \overline{w}_s \label{zStepStart} \\
\text{s.t.: } 
&\underline{w}_s \geq 0, \ \forall s \in S, \quad 
\underline{w}_s \geq z- x_s^{k,l+1} , \ \forall s \in S \\
&\overline{w}_s \geq 0, \ \forall s \in S, \quad 
\overline{w}_s \geq x_s^{k,l+1} - z , \ \forall s \in S. \label{zStepFinish}
\end{flalign}

When the $x$ components are all restricted to take binary values, it is possible to show that the calculation of $z^{k,l+1}$ can be performed in the following closed form where each component of $z^{k,l+1}$ always takes binary value. In that case, its optimal solution is given by
\begin{equation} \label{zExact}
z^{k,l+1}_i = \begin{cases}
               1, \text{ if } \sum_{s \in S}(1-x^{k,l+1}_{s,i})\underline{\rho}^k_s < \sum_{s \in S}x^{k,l+1}_{s,i}\overline{\rho}^k_s \\
               0, \text{ if } \sum_{s \in S}(1-x^{k,l+1}_{s,i})\underline{\rho}^k_s > \sum_{s \in S}x^{k,l+1}_{s,i}\overline{\rho}^k_s \\
               \text{either } 0 \text{ or } 1, \quad\quad\quad\quad \text{ otherwise}
              \end{cases}, \ i = 1, \dots, n_x.
\end{equation}
The cases in which we have a tie might require "flipping a coin" for deciding on the value for $z^{k,l+1}$, as it becomes a case of multiple minima. The existence of multiple minima can be 
better understood from the following explicit form of the solution for the general case. %, in which it becomes evident that for large values of $\rho$ there can be multiple solutions.
In the following proposition, we assume $Z$ is a closed convex set, so that no explicit integrality constraints are enforced.

\begin{proposition}
Suppose a set of scenario dependent solutions $ (x_s)_{s\in S}$, 
where $x_s =(x_{s,i})_{i=1,\dots,n_x} $, are given 
and $z:=(z_i)_{i=1,\dots, n_x}$.  For each $i \in \{1,\dots,n_x\}$ define 
\begin{eqnarray*}
I^{+} (z_i) & := & \{s \in S \mid x_{s,i} > z_i \} \\
I^{-} (z_i) & := & \{s \in S \mid x_{s,i} < z_i\} \\
I^{0} (z_i) & := & \{s \in S \mid x_{s,i} = z_i\}
\end{eqnarray*}
Then $z_i$ solves problem~\eqref{neqn:124} given fixed $(x_s)_{s\in S}$ if and only if
%Then value of the penalty term $\zeta^\rho_s((x_s)_{s\in S} ,\rho) $ (where 
%$\rho := (\underline{\rho}_s,\overline{\rho}_s)_{s \in S}$) is achieved by $z_i= x_{s_i,i}$ if for each $i \in \{1,\dots,n_x\}$ 
%we have:
\begin{equation}
\sum_{s \in I^{+} (z_i)} \overline{\rho}_{s,i} - \sum_{s \in I^{-} (z_i)} \underline{\rho}_{s,i}  \in \left[ -\sum_{s \in I^{0} (z_i)}\overline{\rho}_{s,i},\sum_{s \in I^{0} (z_i)}\underline{\rho}_{s,i}  \right].  \label{neqn:122}
\end{equation}
\end{proposition}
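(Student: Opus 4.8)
The plan is to reduce problem~\eqref{neqn:124} to $n_x$ independent scalar minimisations and to characterise the optimum of each by a first-order condition. After discarding the terms of $\phi^\rho$ that do not involve $z$, minimising over $z$ is the same as minimising $\zeta^\rho(x,z,\rho)=\sum_{s\in S}\big(\underline{\rho}_s^\top[x_s-z]^- + \overline{\rho}_s^\top[z-x_s]^-\big)$, and since $[v]^-=\max\{0,-v\}$ componentwise this objective is \emph{separable} across the coordinates $z_1,\dots,z_{n_x}$. The part depending on the single coordinate $z_i$ is
\[
g_i(z_i) := \sum_{s\in S}\Big( \underline{\rho}_{s,i}\max\{0,\,z_i-x_{s,i}\} + \overline{\rho}_{s,i}\max\{0,\,x_{s,i}-z_i\} \Big),
\]
a finite sum of convex piecewise-linear functions of the scalar $z_i$, hence itself convex and piecewise linear. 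First I would invoke the standard fact that for such a function $z_i$ is a global minimiser if and only if $0\in\partial g_i(z_i)$; with integrality relaxed the separated scalar problems are unconstrained convex minimisations, so this first-order condition is both necessary and sufficient, which is precisely what supplies the ``if and only if'' of the statement.

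Next I would compute $\partial g_i(z_i)$ termwise using the partition of $S$ into the sets $I^+(z_i)$, $I^-(z_i)$ and $I^0(z_i)$. A scenario $s\in I^-(z_i)$ (where $x_{s,i}<z_i$) makes its first $\max$-term active and contributes the derivative $\underline{\rho}_{s,i}$; a scenario $s\in I^+(z_i)$ (where $x_{s,i}>z_i$) makes its second $\max$-term active and contributes $-\overline{\rho}_{s,i}$; and a scenario $s\in I^0(z_i)$ (where $x_{s,i}=z_i$) sits at the common kink of its two $\max$-terms and contributes the whole interval $[-\overline{\rho}_{s,i},\,\underline{\rho}_{s,i}]$. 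Summing, and using that the Minkowski sum of the $I^0(z_i)$ intervals is again an interval, yields
\[
\partial g_i(z_i) = \left[\, -Q - \sum_{s\in I^0(z_i)}\overline{\rho}_{s,i},\ \ -Q + \sum_{s\in I^0(z_i)}\underline{\rho}_{s,i} \,\right], \qquad Q := \sum_{s\in I^+(z_i)}\overline{\rho}_{s,i} - \sum_{s\in I^-(z_i)}\underline{\rho}_{s,i}.
\]
Imposing $0\in\partial g_i(z_i)$ and rearranging the two resulting inequalities isolates $Q$ in the interval $\big[-\sum_{s\in I^0(z_i)}\overline{\rho}_{s,i},\ \sum_{s\in I^0(z_i)}\underline{\rho}_{s,i}\big]$, which is exactly~\eqref{neqn:122}. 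Collecting these coordinatewise characterisations then gives optimality of the full vector $z$.

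The step I expect to be the main obstacle is the careful bookkeeping of signs and of which penalty weight governs the left versus the right slope at the kink $z_i=x_{s,i}$, where all the nonsmoothness is concentrated; it is easy to swap $\underline{\rho}_{s,i}$ and $\overline{\rho}_{s,i}$ or to mismatch the sign on a $\max$-term. A safe way to keep this honest is to compute the one-sided derivatives explicitly, $g_i'(z_i^{-}) = -Q - \sum_{s\in I^0(z_i)}\overline{\rho}_{s,i}$ and $g_i'(z_i^{+}) = -Q + \sum_{s\in I^0(z_i)}\underline{\rho}_{s,i}$, and apply the scalar optimality condition $g_i'(z_i^{-})\le 0\le g_i'(z_i^{+})$, which reproduces~\eqref{neqn:122}. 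Everything else --- separability, convexity, and the reduction of the vector problem to scalar ones --- is routine. As a final sanity check I would specialise to binary data $x_{s,i}\in\{0,1\}$ with $z_i\in\{0,1\}$ and verify that~\eqref{neqn:122} collapses to the threshold test in~\eqref{zExact}.
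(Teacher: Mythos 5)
Your proposal is correct and follows essentially the same route as the paper's own proof: exploit the coordinatewise separability of the penalty $\zeta^\rho$, invoke the convex (piecewise-linear) optimality condition $0\in\partial_{z_i}(\cdot)$, compute the subdifferential as the Minkowski sum of the contributions $\underline{\rho}_{s,i}$ for $s\in I^-(z_i)$, $-\overline{\rho}_{s,i}$ for $s\in I^+(z_i)$, and the intervals $\left[-\overline{\rho}_{s,i},\underline{\rho}_{s,i}\right]$ for $s\in I^0(z_i)$, and rearrange to obtain \eqref{neqn:122}. Your sign bookkeeping matches the paper's, and the added one-sided-derivative check and the specialisation to the binary rule \eqref{zExact} are sensible sanity checks beyond what the paper records.
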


\begin{proof}
	The index $s$ term of the penalty function $\zeta^\rho$ may be written as 
\begin{eqnarray*}
&&\zeta^\rho_s((x_s)_{s\in S} ,z,\rho) \\
&:=& \sum_{i=1}^{n_x} \left[ \sum_{s \in I^{+} (z_i)} \overline{\rho}_{s,i}  \max\{0, x_{s,i} - z_i\}
+  \sum_{s \in I^{-} (z_i)} \underline{\rho}_{s,i} \max\{0, z_i - x_{s,i}\} \right].
\end{eqnarray*}
As this is separable in the variables $(z_1, \dots, z_{n_x})$, its subdifferential is defined as the cross product of intervals, one for each component $i$. Thus, the necessary and sufficient condition 
$$0 \in \partial \zeta^\rho_s((x_s)_{s\in S} , z, \rho),$$ can be equivalently stated as 
$$0 \in \partial_{z_i} \zeta^\rho_s((x_s)_{s\in S} ,z_i,\rho),$$
for each $i =1,\dots,n_x$, which is given by:
\begin{align*}
0 &\in  \sum_{s \in I^{-} (z_i )} \underline{\rho}_{s,i} -\sum_{s \in I^{+}(z_i)} \overline{\rho}_{s,i}  + \sum_{s \in I^{0} (z_i)}\left[-\overline{\rho}_{s,i} , \underline{\rho}_{s,i}\right] \\
&= \sum_{s \in I^{-} (z_i )} \underline{\rho}_{s,i} -\sum_{s \in I^{+}(z_i)} \overline{\rho}_{s,i}  + \left[-\sum_{s \in I^{0} (z_i)}\overline{\rho}_{s,i} , \sum_{s \in I^{0} (z_i)}\underline{\rho}_{s_i,i}\right].
\end{align*}
which in turn is equivalent to (\ref{neqn:122}).
\end{proof}

We now consider how to update the penalty parameters $\rho^k$. 
%It is important to bear in mind that, in practice, the penalty terms $\rho$ are trying to "align" the penalty function in a way that consensus is stimulated. Therefore, a critical aspects regarding computational performance is the pace at which the penalties increase. 
A simple strategy is
\begin{flalign*}
\underline{\rho}_s^{k+1} = \underline{\rho}_s^{k} + \gamma [x_s^{k,l} - z^{k,l}]^- \\ 
\overline{\rho}_s^{k+1} = \overline{\rho}_s^{k} + \gamma [z^{k,l} - x_s^{k,l}]^-. 
\end{flalign*}
%
%which enforces a linear rate. 
By doing so, we are reinforcing the penalties associated with the respective discrepancies. In other words for each $i = 1,\dots,n_x$:
$$
\underline{\rho}_{s,i}^{k+1}  = \begin{cases}
\underline{\rho}_{s,i}^{k} + \gamma (z^{k,l}_i - x_{s,i}^{k,l}), &\text{ if } x_{s,i}^{k,l} < z^{k,l}_i\\
\underline{\rho}_{s,i}^{k}, &\text{ if } x_{s,i}^{k,l} \geq z^{k,l}_i
\end{cases}$$
$$\overline{\rho}_{s,i}^{k+1} = \begin{cases}
\overline{\rho}_{s,i}^{k}  + \gamma (x_{s,i}^{k,l} - z^{k,l}_i), &\text{ if } z^{k,l}_i < x_{s,i}^{k,l}\\
\overline{\rho}_{s,i}^{k}, &\text{ if } z^{k,l}_i \geq x_{s,i}^{k,l}
\end{cases}
$$

\begin{remark}
%The effect of this strategy is to centre $z^{k+1}$ at the next iteration so as to balance the total sum of discrepancies to the left and right of $z^{k+1}_i$ for each $i = 1,\dots,n_x$. Indeed 
The update in $\rho^{k+1}$ has the effect of changing the left hand side of  (\ref{neqn:122}) at the next iteration by the amount:
\begin{equation}
\Delta^{k+1}_i := \gamma \left[\sum_{s \in I^{+}(z^k_i)}[z^k_i - x_{s,i}^k]^{-} - \sum_{s\in I^{-} (z^k_i)} [x^k_{s,i} - z^k_i]^{-}    \right], \label{neqn:123}
\end{equation}
for each $i=1,\dots, n_x$. 
If the addition of this factor ensures the sum in left hand side of (\ref{neqn:122}) at iteration $k+1$ exits the interval 
$\left[ -\sum_{s \in I^{0} (z_i)}\overline{\rho}_{s,i},\sum_{s \in I^{0} (z_i)}\underline{\rho}_{s,i}  \right]$ associated with the prior choice of $z^k_i =x^k_{s,i}$ then we would be forced to choose new consensus values $z^{k}_i$ in order to re-establish the satisfaction of the optimality condition (\ref{neqn:122}). In doing so, a reassignment of the index sets $I^{+} (z_i^k)$, $I^{-} (z_i^k)$, and $I^{0} (z_i^k)$ is effected. As intuition would suggest, the optimality condition (\ref{neqn:122}) is more easily satisfied when $s \in I^{0}(z_i^k)$ for large $\underline{\rho}_{s,i}$ and $\overline{\rho}_{s,i}$, as this makes the target interval $\left[ -\sum_{s \in I^{0} (z_i)}\overline{\rho}_{s,i},\sum_{s \in I^{0} (z_i)}\underline{\rho}_{s,i}  \right]$ larger.
%In order to achieve this  we would need to balance
%the total sum of discrepancies below with those above the new choice of $z^{k+1}_i$
%thus resulting in a reduction of the current value of $z^k_i$. Thus the condition 
% (\ref{neqn:122}) essentially tries to obtain a small difference between these two tallies. A similar logic applies to a negative value of $\Delta^{k+1}$. 
% The target set $ \left[ -\sum_{s \in I^{0} (z_i)}\overline{\rho}_{s,i},\sum_{s \in I^{0} (z_i)}\underline{\rho}_{s,i}  \right]$ tends to
%grow in size as consensus is increasingly enforced, but on the other hand the value of $\Delta^{k+1}_i $ can 
% only diminish as we enforce consensus. Thus  (\ref{neqn:122})  become easier to satisfy
% as the magnitude of the parameters $\underline{\rho}$ and $\overline{\rho}$ grow. 
\end{remark}

%\begin{remark}
%The condition  (\ref{neqn:122}) can also be used to put bounds on the size of $\rho_s^k$. One could impose the extra criteria that we wish there to be as few a number of solutions as possible (preferably a unique solution) to the  problem (\ref{neqn:124}). 
%\end{remark}

To effect a gradual increase in the terms $\Delta^k$ in an attempt to improve convergence with the satisfaction of the NAC, we considered an increasing multiplier factor to $\psi_\rho$ given by $\beta^{(k-1)}-1$ (where $(k-1)$ represents an exponent and not an iteration index). In other words, we consider the objective
%\eqref{phi-mu2} {\bf [this equation reference may need correction]} 
at a given iteration $k$ as being 
$$\phi^{\rho,k}(x_s,y_s,z,\omega):=\sum_{s \in S}p_s(c^\top x_s + q_s^\top y_s) + (\beta^{(k-1)}-1)\left[\sum_{s \in S}\underline{\rho}_s^T[x_s - z]^- +  \sum_{s \in S}\overline{\rho}_s^T[z - x_s]^-\right] \\ .$$

Combining what have been exposed so far, one first algorithmic approach consists of the following setting presented in Algorithm~\ref{Alg2}.
\begin{algorithm}[H]
\caption{Alternating direction method for SMIP}\label{Alg2}
\begin{algorithmic}[1] %line numbering frequency. 
\State \textbf{initialise} $\rho^0=(\underline{\rho}^0, \overline{\rho}^0), \hat{z}^{0}, 
%\omega^0,
\epsilon, \gamma, \beta, l_{\text{max}}, k_{\text{max}}$
\For {$s \in S$}    
    \State $\hat{x}^0_s \gets \argmin_{x,y} \ \braces{\phi^{\rho,1}    (x_s,y_s,\hat{z}^{0},\rho^0) : x_s \in X, y_s \in Y_s(x_s)}$ \label{Alg2:Line3}
\EndFor
\For {$k = 1,\dots, k_{\text{max}}$}
    \State $x^{k,0} \gets \hat{x}^{k-1}$
    \State $z^{k,0} \gets \hat{z}^{k-1}$
    \For {$l = 1, \dots, l_{\text{max}}$}
        \For{$s \in S$}
           \State $(x_s^{k,l},y_s^{k,l}) \gets \argmin_{x,y} \ \braces{\phi^{\rho,k}(x_s,y_s,z^{k,l-1},\rho^k) : x_s \in X, y_s \in Y_s(x_s)}$
        \EndFor     
        \State $z^{k,l} \gets \argmin_{z} \ \phi^{\rho,k}(x^{k,l},y^{k,l},z,\rho^k)$ \label{zUpdate}
            
        %\If {$||z^{k,l} - z^{k,l-1}||_2^2 \leq \epsilon$ \textbf{ or } $l=l_\text{max}$}  
        %\If{$l = 1$}
        %  \State $\Gamma = \infty$
        %\Else
          \State $\Gamma \gets \phi^{\rho,k}(x^{k,l-1},y^{k,l-1},z^{k,l-1},\rho^k) - \phi^{\rho,k}(x^{k,l},y^{k,l},z^{k,l},\rho^k)$
        %\EndIf
        \If {$\Gamma \leq \epsilon$ \textbf{ or } $l=l_\text{max}$}  
        
            \State $(\hat{x}_s^{k},\hat{y}_s^{k}) \gets (x_s^{k,l},y_s^{k,l})$ for all $s \in S$
            \State $\hat{z}^{k} \gets z^{k,l}$
            \State \textbf{break} 
        \EndIf
        \State $l \gets l + 1$ 
    \EndFor
    \If{$||\hat{x}^{k} - \hat{z}^{k}||_2^2 \leq \epsilon$ \textbf{ or } $k = k_{\text{max}}$}
        \State \textbf{return } $((\hat{x}_s^k, \hat{y}_s^k)_{s\in S}, \hat{z}^k)$    
    \Else \quad %update  $\gamma^{k}$ and 
        \State $\underline{\rho}_s^{k} = \underline{\rho}_s^{k-1} + \gamma [\hat{x}_s^{k} - \hat{z}^{k}]^-$ for all $s \in S$
        \State $\overline{\rho}_s^{k} = \overline{\rho}_s^{k-1} + \gamma [\hat{z}^{k} - \hat{x}_s^{k}]^-$ for all $s \in S$
        
        %\State $\alpha^{k} = \lambda\alpha^{k-1}$   
    \EndIf
    \State $k \gets k + 1$
\EndFor
\end{algorithmic}
\end{algorithm}

%Many variant on this idea can be pursued. One consists of using regularisation terms to guide the %search for solutions throughout the algorithm execution. Pursuant to this idea, one my use a weight %parameter $\alpha^k$ and rewrite $\zeta^\rho_s(z^{k,l},\rho^k)$ as
%%
%\begin{flalign}
%	\zeta^\rho_s(z^{k,l},\rho^k,\alpha^k) :~ \min_{x,y,\underline{w},\overline{w}} \ &\alpha^k(c_s^\top x_s + q_s^\top y_s) + (1-\alpha^k) (\underline{\rho}_s\underline{w}_s + \overline{\rho}_s\overline{w}_s) \nonumber\\
%	\text{s.t.: } 
%	&\underline{w}_s \geq 0 \nonumber \\
%	&\underline{w}_s \geq z^{k,l} - x_s \nonumber \\
%	&\overline{w}_s \geq 0 \nonumber \\
%	&\overline{w}_s \geq x_s - z^{k,l} \nonumber \\
%	&x_s \in X, \nonumber \\
%	&y_s \in Y_s(x_s), \nonumber
%\end{flalign}
%
%with $(\alpha)_{k=1,\dots,\infty}$ being such that
%\begin{enumerate}
%	\item $\lim_{k \to \infty} \alpha^k = 0$
%	\item $\lim_{k \to \infty} \sum_{l=1,\dots,k}\alpha^l = \infty$.
%\end{enumerate}
%By doing so, we encourage the method to explore good solution for each scenario individually (exploration) in the early iterations to then change the focus of the search towards feasible (non-anticipative) solutions (exploitation). A typical choice from literature is to geometrically decrease $\alpha^{k+1} = \lambda\alpha^k$, with $\lambda \in (0,1)$, usually with $\lambda =0.9$. Scaling both factors accordingly might as well be useful. 

\section{Experimental setting} \label{sec5}

%\subsection{experimental setting}
In this section we describe the computational experiments performed to assess the performance of the proposed approach. To evaluate the performance of the proposed method, we tested its efficacy on three distinct classes of problems from literature, namely the capacitated facility location problems (CAP) from \cite{boduretal2014}, the dynamic capacity allocation problems (DCAP) available in \cite{ahmedgarcia2003}, and the server location under uncertainty problems (SSLP) first introduced in \cite{ntaimosen2005}. To provide a more solid base of comparison, 50 random instances of two problems from each class were generated. 

The CAP problems are two-stage SMIP problems with pure binary first- and second-stage variables arising in the context of network design problems. We selected the instances coded as 101 and 111 in \cite{boduretal2014}, considering random samples of 100 scenarios from a list of 5000 scenarios available. 

The DCAP problems are two-stage SMIP problems arising in dynamic capacity acquisition and allocation under uncertainty. All problem instances have mixed-integer first-stage variables and pure binary second-stage variables. We selected the instances coded as 233 and 342 (which encodes the number of resources, tasks, and periods, respectively), considering random samples of 100 scenarios from the original 500 available.

The SSLP problems are two-stage SMIP problems arising in server location under uncertainty. The problems have pure binary first-stage variables and mixed-binary second-stage variables. We considered the instances coded as 5-50 and 10-50 (which encode the number or servers and the number of clients, respectively) with 100 scenarios that were randomly generated according to the guidelines provided in \cite{ntaimosen2005}. 

To compare and benchmark the performance of the proposed approach against a known quantity we have implemented the Progressive Hedging (PH) algorithm, which was originally proposed by \cite{RockafellarPH1991} and, as previously discussed, has been widely used as an heuristic approach to solve SMIP problems. The PH algorithm is stated in Algorithm \ref{Alg3} for the sake of completeness. In this algorithm, $$L_\rho^s(x_s,y_s,z,\omega_s) := (c + \omega_s^k)^\top x_s + q_s^\top y_s + \frac{\rho}{2}||x_s - z||_2^2,$$ 
which means that Line \ref{PHSubProb} comprises the solution of $|S|$ mixed-integer quadratic subproblems at each iteration $k$. The analogous step in PGBS algorithm requires us to solve typically less difficult MIP problems instead. 

Another advantage of PGBS in the context of SMIP problems is that $z^{k,l+1}$ tends to (in most cases) satisfy the integrality constraints of the problem. This is in contrast with the consensus value computed with the averaging of PH (Line~\ref{PHAveraging} in Algorithm~\ref{Alg3}), which tends to steer the consensus value away from integral values. 
In the case of binary variables, the PH averaging computation of the consensus $z^{k,l+1}$ is especially prone to producing many fractional valued components which can lead to episodic cycling in binary values set in the assignment of scenario specific variables. 

\begin{algorithm}[H]
\caption{Progressive Hedging for SMIP}\label{Alg3}
\begin{algorithmic}[1] %line numbering frequency. 
\State \textbf{initialise} $\rho, (\omega^0_s)_{s \in S},\epsilon, k_{\text{max}}$
\For {$s \in S$}    
   \State $x^0_s \gets \argmin_{x,y} \ \braces{c^\top x_s + q_s^\top y_s : x_s \in X, y_s \in Y_s(x_s)}$ \label{Alg3:Line3}
\EndFor
\State $z^0 \gets \sum_{s}p_s x^0_s$ 
\For {$k = 1,\dots, k_{\text{max}}$}
    \For{$s \in S$}
        \State $(x_s^{k},y_s^{k}) \gets \argmin_{x,y} \ \braces{L^s_{\rho} (x_s,y_s,z^k,\omega^{k-1}) : x_s \in X, y_s \in Y_s(x_s)}$ \label{PHSubProb}
    \EndFor     
    \State $z^{k} \gets \sum_{s}p_s x^k_s$ \label{PHAveraging}
    \If{$||x^{k} - z^{k-1}||_2^2 \leq \epsilon$ \textbf{ or } $k = k_{\text{max}}$}
        \State \textbf{return } $((x_s^k, y_s^k)_{s\in S}, z^k)$    
    \Else \quad 
        \State $\omega_s^{k} \gets \omega_s^{k-1} + \rho (x_s^{k} - z^{k}), \ \forall s \in S$
    \EndIf
     \State $k \gets k + 1$
\EndFor
\end{algorithmic}
\end{algorithm}

In the PGBS experiments, the parameters were chosen from $\beta \in \{1.25, 1.11\}$,  and $\gamma \in \{0.5\rho^0, \rho^0\}$. Three different initial values for $\rho^0$ were used in both the PGBS and PH experiments. In the Progressive Hedging algorithm, dual multipliers were initialised as $0$ and the penalty parameter ($\rho$) was set to $\rho=\rho^0$. The parameter $z^{0,0}$ has been initialised according to the solution $x^0_s$ (from Line~\ref{Alg2:Line3} in Algorithm~\ref{Alg2} and Line~\ref{Alg3:Line3} in Algorithm~\ref{Alg3}). For Algorithm~\ref{Alg2}, we initialised $z^{0,0}$ as being
\begin{equation}
z_i^{0,0} = \left\lceil \sum_{s \in S}p_s x^0_{i,s} \right\rfloor, \ \forall i= 1\dots,n_x
\end{equation}
for all components restricted to be integer variables, where $\lceil \ \cdot \ \rfloor$ denotes rounding operation. For the components without integrality restrictions, we have dropped the rounding operator. In case of PH (Algorithm~\ref{Alg3}), all components were calculated by dropping the rounding operator.  

As both CAP and SSLP problems have pure binary first-stage variables, we have used \eqref{zExact} to perform the step depicted in Line~\ref{zUpdate} of Algorithm~\ref{Alg2}. For DCAP, we relied on solving \eqref{zStepStart}-\eqref{zStepFinish} explicitly. 

A time limit of 1000 seconds and termination condition of $\epsilon = 10^{-3} $ was used for both methods. A total of 300 ($3\times2\times50$) instances were solved with three parameter choices for PH (different choices of $\rho^0$) and 12 combinations of parameter choices for PBGS (different choices of $\rho^0$, $\beta$ and $\gamma$). The computational experiments were performed on a Intel i7 CPU with 3.40GHz and 8GB of RAM. All methods have been implemented using AIMMS 3.14 and all subproblems have been solved using CPLEX 12.6.3 with its standard configuration. 

\subsection{Numerical results}

A summary of the computational results is presented in Figures \ref{Fig:CAP} to \ref{Fig:SSLP}, which depicts the average computational time and objective value difference for the 50 instances considered for both PH and PBGS in all parameter settings that have been tested. 

The blue bars indicate the average wall clock times for both methods. We highlight that the instances in which PH terminated due to the time limit of 1000s have been removed from the average calculations, these being treated as outliers. The green line shows the average objective value relative difference, which is calculated as
$$\frac{1}{N}\sum_{i=1}^N\frac{z^i_{PBGS} - z^i_{PH}}{z^i_{PH}},$$ 
where $z^i_{PBGS}$ and $z^i_{PH}$ are the objective function values obtained for the solutions returned by PBGS and PH for instance $i$, respectively, and $N$ is the total number of instances considered for average value calculations. To obtain $z^i_{PBGS}$ and $z^i_{PH}$, we used the last solution returned by both methods and evaluated it a posteriori. For the cases in which PH returned solutions that were infeasible in regard to integrality restrictions (typically those obtained when the algorithm stopped due to the time criterion), rounding has been performed to recover a feasible solution to be evaluated when applicable.  

\begin{figure}[H]
    \begin{subfigure}[b]{0.5\textwidth}
        \includegraphics[width = \textwidth]{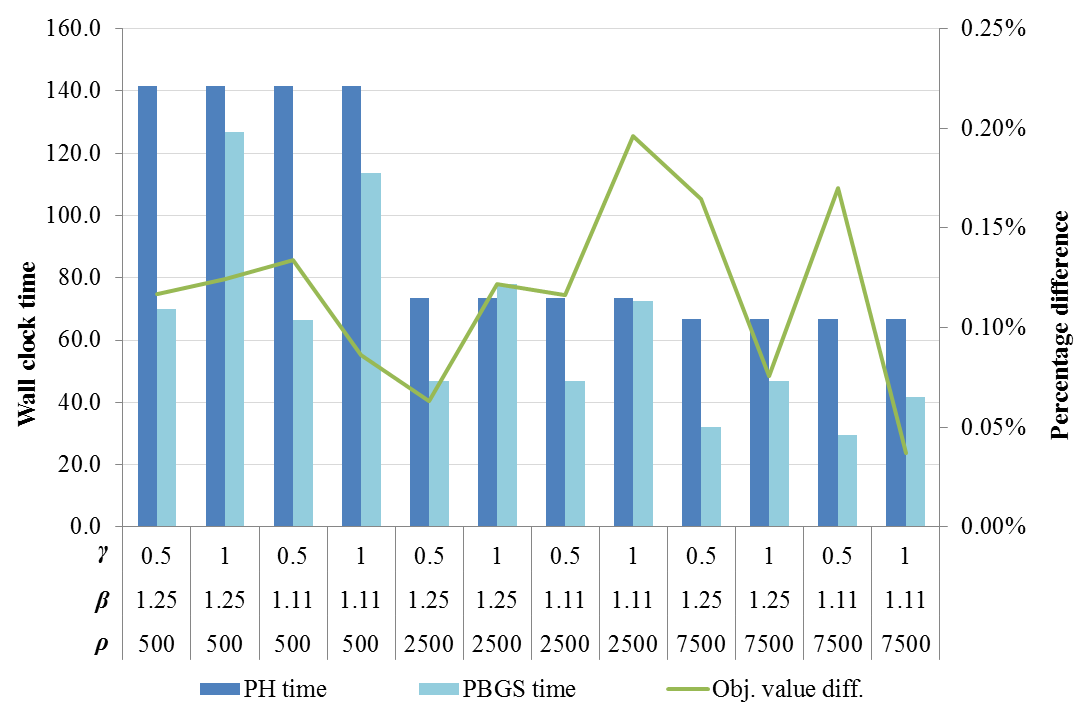}
        \caption{CAP101} 
    \end{subfigure}
    ~
    \begin{subfigure}[b]{0.5\textwidth}
        \includegraphics[width = \textwidth]{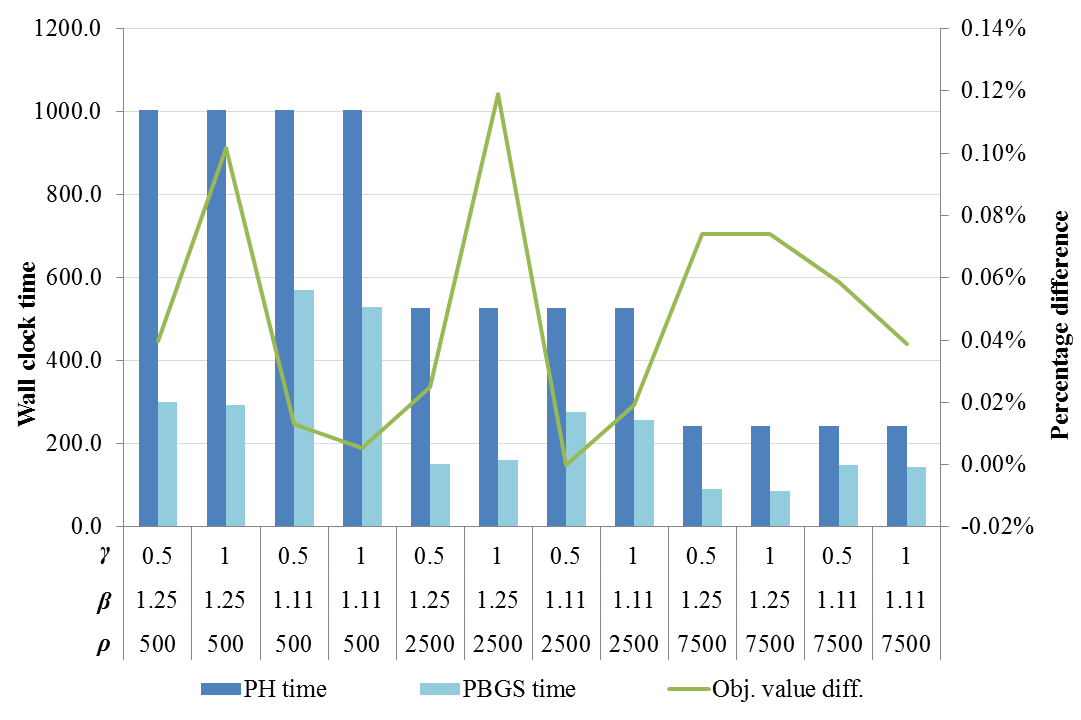}
        \caption{CAP111} 
    \end{subfigure}
    \caption{Results for CAP Problems}
    \label{Fig:CAP}
\end{figure}

\begin{figure}[H]
    \begin{subfigure}[b]{0.5\textwidth}
        \includegraphics[width = \textwidth]{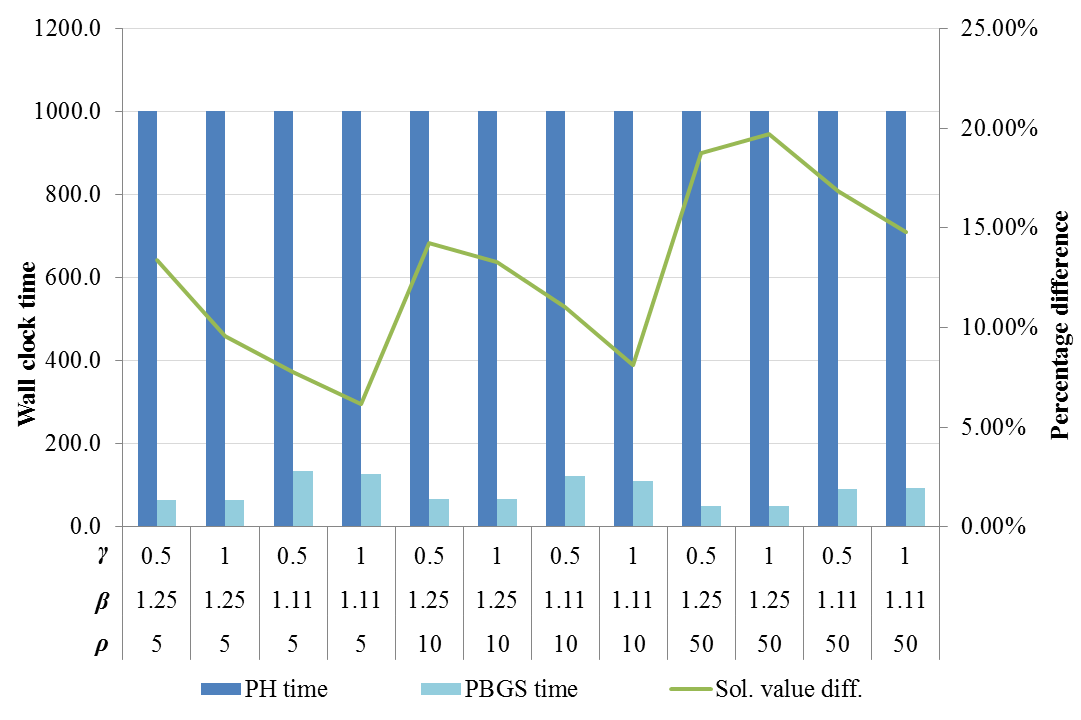}
        \caption{DCAP233} 
    \end{subfigure}
    ~
    \begin{subfigure}[b]{0.5\textwidth}
        \includegraphics[width = \textwidth]{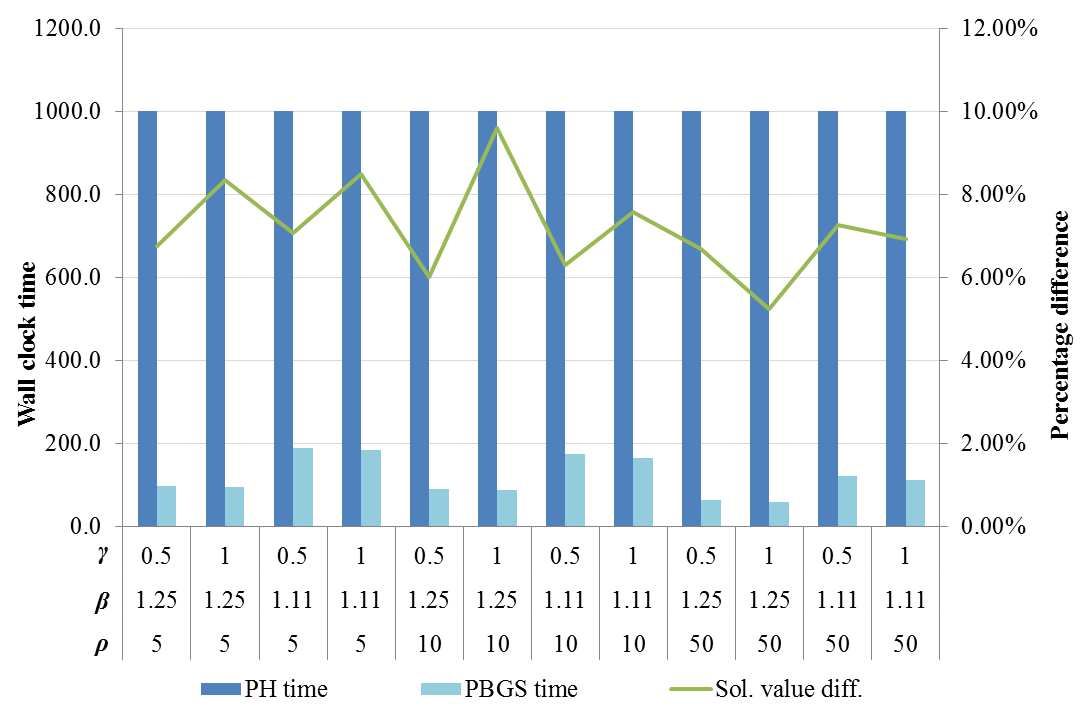}
        \caption{DCAP342} 
    \end{subfigure}
    \caption{Results for DCAP Problems}      
    \label{Fig:DCAP}
\end{figure}

For the CAP instances, all configurations tested with PBGS and PH presented similar values for the objective function, and in most configurations PBGS presented better performance in terms of computational time. For the DCAP instances, in all cases PH terminated due to the time limit of 1000 seconds. For these problems, a comparison in terms of objective function shows that the differences between the objective function value of the solutions found by PGBS and PH are more pronounced. A similar behaviour can be observed in the SSLP instances, in which PBGS outperforms PH in terms of solution times in most cases while providing solutions that are, in the worst case, 0.5\% worse for SSLP5-50 and 5\% worse for SSLP10-50. In the Appendix we present a detailed summary of the statistics for each of the problems, including the fraction of the runs in which PH was not able to converge within the specified time limit. Overall, PBGS seems to be able to obtain comparably good solutions however presenting more reliable convergence behaviour.  

\begin{figure}[H]
	\begin{subfigure}[b]{0.5\textwidth}
		\includegraphics[width = \textwidth]{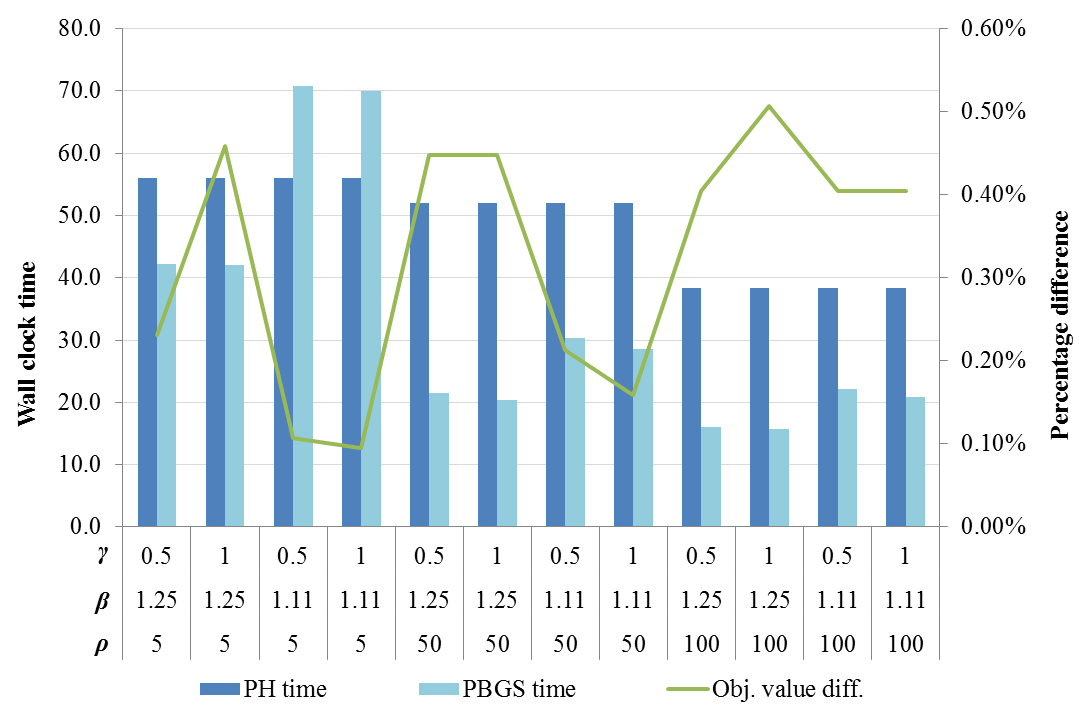}
		\caption{SSLP5-10} 
	\end{subfigure}
	~
	\begin{subfigure}[b]{0.5\textwidth}
		\includegraphics[width = \textwidth]{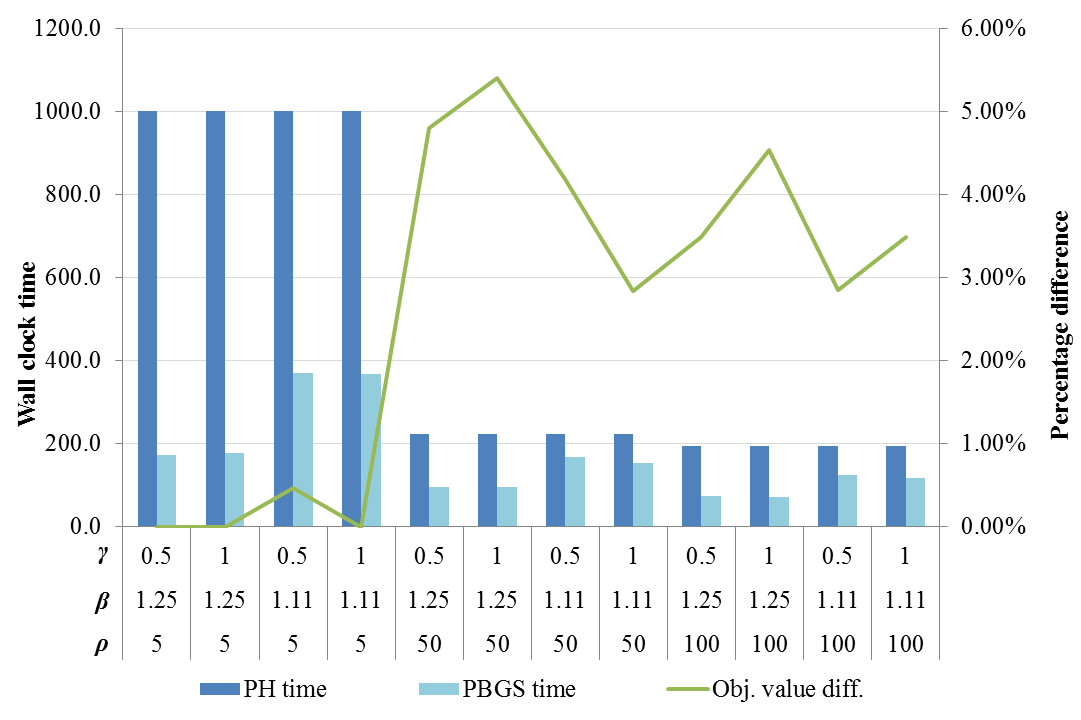}
		\caption{SSLP10-50} 
	\end{subfigure}
	\caption{Results for SSLP Problems}
	\label{Fig:SSLP}
\end{figure}

\section{Conclusions} \label{sec6}

In this paper we have presented an alternative approach for solving stochastic mixed-integer problems based on the combination of penalty-based and block Gauss-Seidel methods. The motivation of such arises from recents theoretical results that motivates the consideration of Lagrangian-based methods under alternative perspectives to approach such problems. 

The computational experiments performed suggest that there is potential for exploiting this framework as it allows the development of a competitive approach in terms of computational efficient. It is worth highlighting that the methodology developed is readily amenable to parallelisation, which is a key point for dealing with large-scale SMIPs.

Further developments of this research could be classified under two distinct standpoints. Under a theoretical perspective, suitable alternative extensions of the block Gauss-Seidel approach into non-smooth non-separable problems are worth investigation. 
A better understanding of how to fine-tune the updates of the penalty coefficients would improve the likelihood (or perhaps even guarantee!) that the block Gauss-Seidel iterations do not display suboptimal stationarity. This would improve the trend of the objective values computed by the main algorithm.
%Also, it would be interesting to further investigate alternative approaches for updating the penalty in more efficient ways. 
In terms of practical considerations, it would be of interest to evaluate the performance of the proposed approach in contexts other than SMIPs and considering its extension to the multi-stage case.

\section*{Acknowledgements} \label{sec6}
The authors would like to acknowledge the support provided by the Australian Research Council (ARC) grant ARC DP140100985.

%\pagebreak

\section*{References}

\bibliographystyle{siam}
\bibliography{PaperTwo_Bibliography}
 
\pagebreak 
 
\appendix
\section{Additional computational results} \label{app}

In this Appendix, we present a detailed summary of the computational results obtained. In Tables \ref{tab:App1} to \ref{tab:App6}, row ``Obj. diff." presents the average value (``Average") and standard deviation (``St. dev.") for the relative difference of the objective function value for the solutions obtained with PBGS and PH (with feasibility restored by rounding whenever PH terminated due to the time limit of 1000s). Row ``Speed-up" calculates the relative speed-up that PBGS presented in comparison to PH in terms of wall clock time (values greater than 1 mean that PGBS was faster). Finally, row ``PG conv. fraction" displays the fraction of instances in which PH converged before reaching the specified time limit.   

\vskip 0.25in

 \begin{table}[htbp]
   \centering
   \tiny
   \begin{tabular}{crrrrrrrrrrrrr}
     \toprule
     \multirow{3}[6]{*}{} & $\rho$ & \multicolumn{4}{c}{500} & \multicolumn{4}{c}{2500} & \multicolumn{4}{c}{7500} \\
     
        & $\beta$ & \multicolumn{2}{c}{1.25} & \multicolumn{2}{c}{1.11} & \multicolumn{2}{c}{1.25} & \multicolumn{2}{c}{1.11} & \multicolumn{2}{c}{1.25} & \multicolumn{2}{c}{1.11} \\
        & $\gamma$ & 0.5 & 1  & 0.5 & 1  & 0.5 & 1  & 0.5 & 1  & 0.5 & 1  & 0.5 & 1 \\ \midrule
     \multirow{2}[4]{*}{Obj. diff.} & Average & 0.12\% & 0.12\% & 0.13\% & 0.09\% & 0.06\% & 0.12\% & 0.12\% & 0.20\% & 0.16\% & 0.08\% & 0.17\% & 0.04\% \\ 
        &  St. dev. & 0.13\% & 0.15\% & 0.18\% & 0.14\% & 0.10\% & 0.14\% & 0.14\% & 0.20\% & 0.14\% & 0.10\% & 0.14\% & 0.07\% \\
     \multirow{2}[4]{*}{Speed-up} & Average & 2.02 & 1.12 & 2.14 & 1.24 & 1.63 & 0.93 & 1.57 & 1.01 & 2.12 & 1.44 & 2.28 & 1.60 \\
        & St. dev. & 0.47 & 0.29 & 0.62 & 0.31 & 2.31 & 1.04 & 1.91 & 1.12 & 0.82 & 0.55 & 0.86 & 0.53 \\ \midrule
     \multicolumn{2}{c}{PH conv. Fraction.} & \multicolumn{4}{c}{96.0\%} & \multicolumn{4}{c}{92.0\%} & \multicolumn{4}{c}{94.0\%} \\
     \bottomrule
     \end{tabular}   
   \caption{CAP101}
   \label{tab:App1}%
 \end{table}%
 \begin{table}[htbp]
   \centering
   \tiny 
     \begin{tabular}{ccrrrrrrrrrrrr}
     \toprule
     \multirow{3}[6]{*}{} & $\rho$ & \multicolumn{4}{c}{500} & \multicolumn{4}{c}{2500} & \multicolumn{4}{c}{7500} \\
     
        & $\beta$ & \multicolumn{2}{c}{1.25} & \multicolumn{2}{c}{1.11} & \multicolumn{2}{c}{1.25} & \multicolumn{2}{c}{1.11} & \multicolumn{2}{c}{1.25} & \multicolumn{2}{c}{1.11} \\
        & $\gamma$ & 0.5 & 1  & 0.5 & 1  & 0.5 & 1  & 0.5 & 1  & 0.5 & 1  & 0.5 & 1 \\ \midrule
     \multirow{2}[4]{*}{Obj. diff.} & Average & 0.04\% & 0.10\% & 0.01\% & 0.01\% & 0.02\% & 0.12\% & 0.00\% & 0.02\% & 0.07\% & 0.07\% & 0.06\% & 0.04\% \\
        & St. dev. & 0.06\% & 0.11\% & 0.03\% & 0.01\% & 0.16\% & 0.31\% & 0.17\% & 0.16\% & 0.05\% & 0.05\% & 0.06\% & 0.05\% \\
     \multirow{2}[4]{*}{Speed-up} & Average & 3.95 & 3.67 & 2.06 & 1.87 & 2.98 & 2.93 & 1.60 & 1.73 & 1.97 & 2.06 & 1.19 & 1.25 \\
        & St. dev. & 0.27 & 0.01 & 0.11 & 0.37 & 1.28 & 1.31 & 0.51 & 0.61 & 0.76 & 0.81 & 0.48 & 0.51 \\ \midrule
     \multicolumn{2}{c}{PH conv. Fraction.} & \multicolumn{4}{c}{4.0\%} & \multicolumn{4}{c}{86.0\%} & \multicolumn{4}{c}{92.0\%} \\
     \bottomrule
     \end{tabular}%
   \label{tab:App2}%
   \caption{CAP111}      
\end{table}%
 \begin{table}[htbp]
   \centering
   \tiny
     \begin{tabular}{ccrrrrrrrrrrrr}
     \toprule
     \multirow{3}[6]{*}{} & $\rho$ & \multicolumn{4}{c}{5} & \multicolumn{4}{c}{10} & \multicolumn{4}{c}{50} \\
     
        & $\beta$ & \multicolumn{2}{c}{1.25} & \multicolumn{2}{c}{1.11} & \multicolumn{2}{c}{1.25} & \multicolumn{2}{c}{1.11} & \multicolumn{2}{c}{1.25} & \multicolumn{2}{c}{1.11} \\
        & $\gamma$ & 0.5 & 1  & 0.5 & 1  & 0.5 & 1  & 0.5 & 1  & 0.5 & 1  & 0.5 & 1 \\ \midrule
     \multirow{2}[4]{*}{Obj. diff.} & Average & 13.41\% & 9.56\% & 7.78\% & 6.16\% & 14.22\% & 13.28\% & 11.04\% & 8.10\% & 18.75\% & 19.73\% & 16.86\% & 14.80\% \\
        & St. dev. & 2.18\% & 2.44\% & 2.26\% & 2.53\% & 2.35\% & 2.88\% & 2.78\% & 2.65\% & 2.06\% & 2.33\% & 2.21\% & 2.57\% \\
     \multirow{2}[4]{*}{Speed-up} & Average & N/A & N/A & N/A & N/A & N/A & N/A & N/A & N/A & N/A & N/A & N/A & N/A \\
        & St. dev. & N/A & N/A & N/A & N/A & N/A & N/A & N/A & N/A & N/A & N/A & N/A & N/A \\
     \multicolumn{2}{c}{PH conv. Fraction.} & \multicolumn{4}{c}{0.0\%} & \multicolumn{4}{c}{0.0\%} & \multicolumn{4}{c}{0.0\%} \\
     \bottomrule
     \end{tabular}%
   \label{tab:App3}%
   \caption{DCAP233}         
 \end{table}%
 \begin{table}[htbp]
   \centering
   \tiny
     \begin{tabular}{ccrrrrrrrrrrrr}
     \toprule
     \multirow{3}[6]{*}{} & $\rho$ & \multicolumn{4}{c}{5} & \multicolumn{4}{c}{10} & \multicolumn{4}{c}{50} \\
     
        & $\beta$ & \multicolumn{2}{c}{1.25} & \multicolumn{2}{c}{1.11} & \multicolumn{2}{c}{1.25} & \multicolumn{2}{c}{1.11} & \multicolumn{2}{c}{1.25} & \multicolumn{2}{c}{1.11} \\
        & $\gamma$ & 0.5 & 1  & 0.5 & 1  & 0.5 & 1  & 0.5 & 1  & 0.5 & 1  & 0.5 & 1 \\  \midrule
     \multirow{2}[4]{*}{Obj. diff.} & Average & 6.77\% & 8.36\% & 7.08\% & 8.51\% & 6.01\% & 9.61\% & 6.31\% & 7.59\% & 6.69\% & 5.24\% & 7.27\% & 6.92\% \\
        & St. dev. & 4.78\% & 4.45\% & 4.47\% & 4.39\% & 3.26\% & 5.36\% & 5.07\% & 3.70\% & 3.82\% & 2.59\% & 2.90\% & 3.67\% \\
     \multirow{2}[4]{*}{Speed-up} & Average & \multicolumn{1}{c}{N/A} & N/A & N/A & N/A & N/A & N/A & N/A & N/A & N/A & N/A & N/A & N/A \\
        & St. dev. & N/A & N/A & N/A & N/A & N/A & N/A & N/A & N/A & N/A & N/A & N/A & N/A \\
     \multicolumn{2}{c}{PH conv. Fraction.} & \multicolumn{4}{c}{0.0\%} & \multicolumn{4}{c}{0.0\%} & \multicolumn{4}{c}{0.0\%} \\
     \bottomrule
     \end{tabular}%
   \label{tab:App4}%
   \caption{DCAP342}
  \end{table}%
 \begin{table}[htbp]
   \centering
   \tiny
     \begin{tabular}{ccrrrrrrrrrrrr}
     \toprule
\multirow{3}[6]{*}{} & $\rho$ & \multicolumn{4}{c}{5} & \multicolumn{4}{c}{50} & \multicolumn{4}{c}{100} \\
     
        & $\beta$ & \multicolumn{2}{c}{1.25} & \multicolumn{2}{c}{1.11} & \multicolumn{2}{c}{1.25} & \multicolumn{2}{c}{1.11} & \multicolumn{2}{c}{1.25} & \multicolumn{2}{c}{1.11} \\
        & $\gamma$ & 0.5 & 1  & 0.5 & 1  & 0.5 & 1  & 0.5 & 1  & 0.5 & 1  & 0.5 & 1 \\   \midrule
     \multirow{2}[4]{*}{Obj. diff.} & Average & 0.23\% & 0.46\% & 0.11\% & 0.10\% & 0.45\% & 0.45\% & 0.21\% & 0.16\% & 0.40\% & 0.51\% & 0.40\% & 0.40\% \\
        & St. dev. & 0.76\% & 1.14\% & 0.49\% & 0.32\% & 1.08\% & 1.08\% & 0.64\% & 0.53\% & 1.07\% & 1.25\% & 1.07\% & 1.07\% \\
     \multirow{2}[4]{*}{Speed-up} & Average & 1.29 & 1.29 & 0.76 & 0.81 & 1.32 & 1.40 & 0.93 & 1.03 & 1.12 & 1.21 & 0.82 & 0.86 \\
        & St. dev. & 0.60 & 0.56 & 0.30 & 0.31 & 0.91 & 0.96 & 0.61 & 0.67 & 0.68 & 0.82 & 0.58 & 0.49 \\
     \multicolumn{2}{c}{PH conv. Fraction.} & \multicolumn{4}{c}{100.0\%} & \multicolumn{4}{c}{98.0\%} & \multicolumn{4}{c}{98.0\%} \\
     \bottomrule
     \end{tabular}%
     \caption{SSLP5-50}
   \label{tab:App5}%
 \end{table}%
     
 \begin{table}[htbp]
   \centering
   \tiny
     \begin{tabular}{ccrrrrrrrrrrrr}
     \toprule
\multirow{3}[6]{*}{} & $\rho$ & \multicolumn{4}{c}{5} & \multicolumn{4}{c}{50} & \multicolumn{4}{c}{100} \\
     
        & $\beta$ & \multicolumn{2}{c}{1.25} & \multicolumn{2}{c}{1.11} & \multicolumn{2}{c}{1.25} & \multicolumn{2}{c}{1.11} & \multicolumn{2}{c}{1.25} & \multicolumn{2}{c}{1.11} \\
        & $\gamma$ & 0.5 & 1  & 0.5 & 1  & 0.5 & 1  & 0.5 & 1  & 0.5 & 1  & 0.5 & 1 \\    \midrule
     \multicolumn{2}{c}{} & 0.5 & 1  & 0.5 & 1  & 0.5 & 1  & 0.5 & 1  & 0.5 & 1  & 0.5 & 1 \\
     \multirow{2}[4]{*}{Obj. diff.} & Average & 0.00\% & 0.00\% & 0.47\% & 0.00\% & 4.80\% & 5.41\% & 4.19\% & 2.84\% & 3.50\% & 4.54\% & 2.85\% & 3.50\% \\
        & St. dev. & 0.0\% & 0.0\% & 0.5\% & 0.0\% & 12.5\% & 12.9\% & 12.0\% & 10.5\% & 8.8\% & 9.9\% & 8.0\% & 8.8\% \\
     \multirow{2}[4]{*}{Speed-up} & Average & 7.95 & 7.86 & 2.95 & 4.07 & 2.27 & 2.35 & 1.35 & 1.41 & 1.69 & 1.72 & 1.03 & 1.10 \\
        & St. dev. & 2.14 & 2.08 & 0.36 & 1.13 & 1.11 & 1.29 & 0.78 & 0.74 & 1.04 & 1.05 & 0.69 & 0.75 \\
     \multicolumn{2}{c}{PH conv. Fraction.} & \multicolumn{4}{c}{4.0\%} & \multicolumn{4}{c}{96.0\%} & \multicolumn{4}{c}{90.0\%} \\
     \bottomrule
     \end{tabular}%
   \caption{SSLP10-50}          
   \label{tab:App6}%
\end{table}%

\end{document}